\newtheorem{Theorem}{Theorem}[section]
\newtheorem{Lemma}[Theorem]{Lemma}
\newtheorem{Prop}[Theorem]{Proposition}
\newtheorem{Rem}[Theorem]{Remark}
\def\Erw{\mathbb{E}}
\def\N{\mathbb{N}}
\def\Prob{\mathbb{P}} 
\def\R{\mathbb{R}}
\def\Var{\mathbb{V}{\rm ar}}
\def\sfs{\mathsf{s}}
\def\sfm{\mathsf{m}}
\def\eps{\varepsilon}
\def\1{\vec{1}}
\def\3{{\ss}}
\def\eqdist{\stackrel{d}{=}}
\def\iprob{\stackrel{\Prob}{\to}}
\def\Plim{\mathop{\Prob\text{\rm -lim}\,}}
\begin{document}

\title*{Functional limit theorems for the number of occupied boxes in the Bernoulli sieve}
\titlerunning{The number of occupied boxes in the Bernoulli sieve}
\author{Gerold Alsmeyer$^{1}$, Alexander Iksanov$^{2}$ and Alexander Marynych$^{1,2}$}
\institute{$^{1}$ Institute of Mathematical Statistics, Department
of Mathematics and Computer Science, University of M\"unster,
Orl\'eans-Ring 10, D-48149 M\"unster, Germany.\at
$^{2}$ Faculty of Cybernetics, Taras Shevchenko National University of Kyiv, 01601 Kyiv, Ukraine\at
\email{gerolda@math.uni-muenster.de, iksan@univ.kiev.ua, marynych@unicyb.kiev.ua}}

\maketitle

{\abstract{\small The Bernoulli sieve is the infinite Karlin ``balls-in-boxes'' scheme with random probabilities of stick-breaking type. Assuming that the number of placed balls equals $n$, we prove several functional limit theorems (FLTs) in the Skorohod space $D[0,1]$ endowed with the $J_{1}$- or $M_{1}$-topology for the number $K_{n}^{*}(t)$ of boxes containing at most $[n^{t}]$ balls, $t\in[0,1]$, and the random distribution function $K_{n}^{*}(t)/K_{n}^{*}(1)$, as $n\to\infty$. The limit processes for $K_{n}^{*}(t)$ are of the form $(X(1)-X((1-t)-))_{t\in[0,1]}$, where $X$ is either a Brownian motion, a spectrally negative stable L\'evy process, or an inverse stable subordinator. The small values probabilities for the stick-breaking factor determine which of the alternatives occurs. If the logarithm of this factor is integrable, the limit process for $K_{n}^{*}(t)/K_{n}^{*}(1)$ is a L\'{e}vy bridge. Our approach relies upon two novel ingredients and particularly enables us to dispense with a Poissonization-de-Poissonization step which has been an essential component in all the previous studies of $K_{n}^{*}(1)$. First, for any Karlin occupancy scheme with deterministic probabilities $(p_{k})_{k\ge 1}$, we obtain an approximation, uniformly in $t\in[0,1]$, of the number of boxes with at most $[n^{t}]$ balls by a counting function defined in terms of $(p_{k})_{k\ge 1}$. Second, we prove several FLTs for the number of visits to the interval $[0,nt]$ by a perturbed random walk, as $n\to\infty$. If the stick-breaking factor has a beta distribution with parameters $\theta>0$ and $1$, the process $(K_{n}^{*}(t))_{t\in[0,1]}$ has the same distribution as a similar process defined by the number of cycles of length at most $[n^{t}]$ in a $\theta$-biased random permutation a.k.a.\ a Ewens permutation with parameter $\theta$. As a consequence, our FLT with Brownian limit forms a generalization of a FLT obtained earlier in the context of Ewens permutations by DeLaurentis and Pittel \cite{DeLaurentisPittel:85}, Hansen \cite{Hansen:90}, Donnelly, Kurtz and Tavar\'e \cite{DonKurTav:91}, and Arratia and Tavar\'e \cite{ArratiaTav:92}.}}

\bigskip

{\noindent \textbf{AMS 2000 subject classifications:} primary 60F17; secondary 60C05
}

{\noindent \textbf{Keywords:} Bernoulli sieve, infinite urn model, perturbed random walk, renewal theory}

\section{Introduction}\label{sec:intro}

\subsection{The Bernoulli sieve and regenerative compositions}
Given a sequence $W_{1},W_{2},\ldots$ of independent copies of a $(0,1)$-valued random variable $W$, consider the random partition of $(0,1]$ into the subintervals $(V_{i},V_{i-1}]$, $i\in\N$, called boxes hereafter, where
$$ V_{0}\,:=\,1,\quad\text{and}\quad V_{n}\,:=\,\prod_{i=1}^{n}W_{i}\quad\text{for }n\in\N. $$
One may also think of the $V_{i}$ as the successive cut points in a stick-breaking procedure at the end of which a stick with endpoints 0 and 1 is broken into the infinitely many pieces with endpoints $V_{i}$ and $V_{i-1}$, $i\in\N$. Next, let $(U_{j})_{j\in\N}$ be a sequence of independent uniform $(0,1)$ random variables which is also independent of $(W_{k})_{k\in\N}$. The \emph{Bernoulli sieve} is a random occupancy scheme in which balls, labeled by $1,2,\ldots$ and of random weights $U_{1},U_{2},\ldots$, are placed into the boxes in accordance with their weight. Hence, a ball of weight $U$ goes into the box $(V_{i},V_{i-1}]$ iff $V_{i}<U\le V_{i-1}$. Since its introduction by Gnedin \cite{Gnedin:04}, the model has been studied in a series of articles \cite{GneIksMar:10b,GneIksMar:10,GneIksMar:12,GneIksRos:08,GneIksNegRos:09,Iksanov:12,Iksanov:13,IksMarVat:15}. The term ``Bernoulli sieve'' can be understood when interpreting the occupancy scheme in terms of a randomized leader-election procedure (see \cite{Gnedin:04} for more details).

Defining the (infinite) vector of occupancy counts
\begin{equation}\label{zni_def}
Z^{*}_{n,i}\ :=\ \#\{1\le j\le n: U_{j}\in (V_{i},\,V_{i-1}]\},\quad i\in\N,
\end{equation}
thus $Z^{*}_{n,i}\ge 0$ for $i\in\N$ and $\sum_{i\ge 1}Z^{*}_{n,i}=n$, we see that the $\mathcal{Z}^{*}_{n}:=(Z^{*}_{n,i})_{i\in\N}$ induces a \emph{weak composition} of the integer $n$. The attribute ``weak'' is intended to emphasize that zero parts of the composition are allowed. 
By sequentially allocating the points $U_{1},U_2,\ldots$, the sequence $(\mathcal{Z}^{*}_{n})_{n\ge 0}$ is consistently defined for all $n\in\N_{0}$, and it further has the following two distinguished regenerative properties:
\begin{itemize}\itemsep2pt
\item \emph{Sampling consistency}: If one out of $n$ points is removed uniformly at random from the interval it belongs to, then the resulting weak composition of $n-1$ has the same law as $\mathcal{Z}^{*}_{n-1}$.
\item \emph{The deletion property}: If the first interval $(V_{1},1]$ contains $m$ points and is removed, then a weak composition of $n-m$ with the same law as $\mathcal{Z}^{*}_{n-m}$ is obtained.
\end{itemize}

The class of random compositions generated by a Bernoulli sieve does not cover all regenerative compositions (i.e., those having the two aforementioned properties). In fact,
Theorem 5.2 in \cite{GnedinPitman:05} states that every consistent family of regenerative compositions can be constructed by allocating the points $U_{1},U_{2},\ldots$ to countably many open intervals forming the complement of the closed range of a multiplicative zero-drift subordinator $(e^{-L_{t}})_{t\ge 0}$ independent of the uniform sample. Within this more general framework, weak compositions pertaining to a Bernoulli sieve are those corresponding to a compound Poisson process $(L_{t})_{t\ge 0}$.

\vspace{.1cm}
In the classical occupancy scheme of Karlin \cite{Karlin:67} balls are placed independently in an infinite array of boxes in accordance with a probability vector $(p_{k})_{k\in\N}$, where $p_{k}$ denotes the probability of choosing box $k$. The Bernoulli sieve is the Karlin occupancy scheme with random probability
\begin{equation}\label{BS_frequencies}
p^{*}_{k}\ =\ V_{k-1}-V_{k}\ =\ W_{1}\cdots W_{k-1}(1-W_{k})
\end{equation}
of choosing box $k\in\N$ and such that, given $(p_{k}^{*})_{k\in\N}$, balls are allocated independently. The Bernoulli sieve can therefore be thought of as an \emph{occupancy scheme in random environment}, the latter being defined by the i.i.d. random variables $W_{1},W_{2},\ldots$.

For $r=1,\ldots, n$, denote by $K^{*}_{n,r}:=\sum_{i\ge 1}\1_{\{Z^{*}_{n,i}=r\}}$ the number of boxes containing exactly $r$ balls, and let $K^{*}_{n}:=\sum_{r=1}^n K^{*}_{n,r}=\sum_{i\ge 1}\1_{\{Z^{*}_{n,i}\ge 1\}}$ denote the number of nonempty boxes. Then define
\begin{equation*}\label{knt_def}
K^{*}_{n}(t)\ :=\ \sum_{r=1}^{[n^{t}]}K^{*}_{n,r}\ =\ \sum_{i \ge 1}\1_{\{Z^{*}_{n,i}\in [1,n^{t}]\}}
\end{equation*}
for $t\in [0,1]$.

\vspace{.1cm}
Throughout the rest of the paper, we will use the following notational rule. Quantities related to the Bernoulli sieve are starred, whereas corresponding quantities for the general Karlin scheme are not. The same rule applies, for the most part, to perturbed random walks to be defined in Section \ref{sec_prw}.

\subsection{$J_{1}$- and $M_{1}$-topology: a brief review}

For $T>0$, let $D[0,T]$ denote the Skorohod space of real-valued functions on $[0,T]$,
which are right-continuous with left-hand limits. We will need the $J_{1}$- and the
$M_{1}$-topology on $D[0,T]$ which are commonly used and were introduced in a famous paper by Skorohod \cite{Skorohod:56}. The $J_{1}$-topology is generated by the metric
$$ d(f,g)\ :=\ \inf_{\lambda\in\Lambda}\left(\sup_{y\in[0,T]}|f(\lambda(t))-g(t)|\vee
\sup_{t\in[0,T]}|\lambda(t)-t|\right), $$
where $\Lambda$ denotes the class of strictly increasing, continuous functions $\lambda:[0,T]\to [0,T]$ with $\lambda(0)=0$ and $\lambda(T)=T$. Functions $f_{n}$ which are $J_{1}$-convergent to a limit function $f$ are allowed to have a single jump in the vicinity of a jump of $f$. Furthermore, the positions of the jumps of $f_{n}$ and their magnitudes should converge to the positions of the jumps of $f$ and their magnitude. This is in contrast to locally uniform convergence which requires the positions of jumps of $f_{n}$ and $f$ to be the same rather than asymptotically equal.

The $M_{1}$-topology is weaker than the $J_{1}$-topology and $M_{1}$-convergence of $f_{n}$ to $f$ is equivalent to the convergence of the closed graph of $f_{n}$ to the closed graph of $f$. For instance, choosing $f_{n}(t):=\1_{[1-1/n,\,1+1/n)}(t)+2\cdot\1_{[1+1/n,\,2]}(t)$ and $f(t):=2\cdot\1_{[1,\,2]}(t)$, the $f_{n}$ do converge to $f$ in the $M_{1}$-topology, but not in the $J_{1}$-topology on $D[0,2]$. Without going into details, we mention that the $M_{1}$-topology is typically used in functional limit theorems in which the limit process has jumps unmatched in the convergent sequence of processes. This may happen, for example, if the converging processes are a.s.\ continuous or have asymptotically vanishing jumps, while the limit process is discontinuous with positive probability. We refer the reader to the monograph \cite{Whitt:02} for a comprehensive exposition of the $J_{1}$- and the $M_{1}$-topologies as well as some
other topologies on $D[0,T]$.

Throughout the paper $\overset{J_{1}}{\Longrightarrow}$ and $\overset{M_{1}}{\Longrightarrow}$ will mean weak convergence in the Skorohod space when endowed with the $J_{1}$-topology and the $M_{1}$-topology, respectively. Furthermore, we will use $\iprob$ and also $\Plim$ to denote convergence in probability with respect to $\Prob$. Finally, $\eqdist$ will stand for equality in distribution.

\subsection{Ewens permutations and Ewens sampling formula}

Let $\mathfrak{S}_{n}$ be the symmetric group of order $n$. The Ewens family of random permutations is a parametric family $\Pi_{n}:=\Pi_{n}(\theta)$, $\theta>0$, of random objects taking values in $\mathfrak{S}_{n}$ with probabilities
$$ \Prob\{\Pi_{n}=\sigma\}\ =\ \frac{\Gamma(\theta)\theta^{|\sigma|}}{\Gamma(n+\theta)},\quad
\sigma\in\mathfrak{S}_{n}, $$ 
where $|\sigma|$ denotes the number of cycles in $\sigma$ and $\Gamma$ is the Euler gamma function. Plainly, $\Pi_{n}(1)$ is a uniform random permutation of $\{1,\ldots,n\}$ for which all $n!$ permutations are equally likely.

For $r=1,\ldots, n$, denote by $C_{n,r}$ the number of cycles of
length $r$ in $\Pi_{n}$. The following is the famous Ewens sampling formula:
$$ \Prob\{C_{n,1}=c_{1},\ldots,C_{n,n}=c_{n}\}\ =\ \frac{n!\Gamma(\theta)}{\Gamma(\theta+n)}\prod_{i=1}^{n}\frac{\theta^{c_{i}}}{i^{c_{i}}c_{i}!}\,\1_{\{\sum_{i=1}^{n}ic_{i}=n\}}. $$
Define $C_{n}(t):=\sum_{r=1}^{[n^{t}]}C_{n,r}$ for $t\in [0,1]$. A remarkable result, originally due to DeLaurentis and Pittel \cite{DeLaurentisPittel:85} for the uniform case
$\theta=1$ and to Hansen \cite{Hansen:90} for the general case $\theta>0$, asserts that
\begin{equation}\label{ESF_FLT}
\left(\frac{C_{n}(t)-\theta t\log n}{\sqrt{\theta\log n}}\right)_{t\in [0,1]}\quad\overset{J_{1}}{\Longrightarrow}\quad (B(t))_{t\in [0,1]},\quad n\to\infty,
\end{equation}
where $(B(t))_{t\in[0,1]}$ denotes a standard Brownian motion. Later, much simpler proofs of \eqref{ESF_FLT} were found by Donnelly, Kurtz and Tavar\'{e} \cite{DonKurTav:91} and Arratia and Tavar\'e \cite{ArratiaTav:92}. While the first work is based on a Poisson embedding, the second one uses Feller coupling \cite[p.~16]{ArrBarTav:03} as a key tool.

\vspace{.1cm}
The connection between the Ewens permutations and the Bernoulli sieve emerges when choosing $W$ to have a beta distribution with parameters $\theta>0$ and $1$, i.e. $\Prob\{W\in {\rm d}x\}=\theta x^{\theta-1}\1_{(0,1)}(x){\rm d}x$. In this case (see, for
instance, Example 2 in \cite{GnedinPitman:05} or Section 5.4 in
\cite{ArrBarTav:03})
\begin{equation*}
(K^{*}_{n,1},\ldots,K^{*}_{n,n})\ \eqdist\ (C_{n,1},\ldots,C_{n,n}),\quad n\in\N
\end{equation*}
which implies that
\begin{equation}\label{Ewens_as_BS}
(K^{*}_{n}(t))_{t\in[0,1]}\ \eqdist\ (C_{n}(t))_{t\in[0,1]}.
\end{equation}

\section{Main results and discussion}\label{main}

The asymptotic behavior of the small-parts counts in the Bernoulli sieve is well-understood if $\Erw(|\log W|)<\infty$. According to Theorem 3.3 in \cite{GneIksRos:08},
the vector $(K^{*}_{n,1},\ldots, K^{*}_{n,j})$, with $j$ fixed, converges in distribution to a similar vector defined in terms of a limiting ``balls-in-boxes'' scheme in which ball weights are identified with the arrival times of a standard Poisson process on $[0,\infty)$ and boxes are formed by successive points of $\exp(A)$ for a stationary renewal point process $A$ on $\R$ driven by the distribution of $|\log W|$. A criterion for weak convergence of $K^{*}_{n}$ is given in Theorem 1.1 and Corollary 1.1 of \cite{GneIksMar:10}. One consequence of these results is that, if $\Erw(|\log W|)<\infty$, the contribution of the small-parts counts to $K^{*}_{n}$ becomes asymptotically negligible, as $n\to\infty$, and it raises the natural question which components of the vector $(K^{*}_{n,1},\ldots, K^{*}_{n,n})$ provide the main contribution to $K^{*}_{n}$ for large $n$. If $\Erw|\log W|<\infty$, the answer is provided by Proposition \ref{probab}, for the case $\Erw|\log W|=\infty$ see \eqref{infinite}.

\begin{Prop}\label{probab}
If $\mu:=\Erw|\log W|<\infty$, then
\begin{equation}\label{df}
\Plim_{n\to\infty}\sup_{t\in [0,1]}\,\bigg|\frac{K^{*}_{n}(t)}{K^{*}_{n}}-t\bigg|\ =\ 0.
\end{equation}
\end{Prop}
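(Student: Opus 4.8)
The plan is to show that the main contribution to $K_{n}^{*}$ comes from boxes receiving an ``intermediate'' number of balls, namely between some slowly growing lower threshold and $[n^{t}]$ for $t$ close to $1$, and that, after normalization, the proportion of such boxes with at most $[n^{t}]$ balls converges to $t$ uniformly. A natural way to see this is via the intuition that, conditionally on the frequencies $(p_{k}^{*})$, box $k$ is occupied by roughly $n p_{k}^{*}$ balls, so $Z_{n,i}^{*}\le [n^{t}]$ corresponds approximately to $p_{i}^{*}\le n^{t-1}$, i.e.\ to $|\log p_{i}^{*}|\ge (1-t)\log n$. Writing $|\log p_{k}^{*}|$ in terms of the associated perturbed random walk (the partial sums of $|\log W_{i}|$ together with the perturbation $|\log(1-W_{k})|$), the number of indices $i$ with $|\log p_{i}^{*}|\le y$ grows linearly in $y$ with slope $1/\mu$ by the strong law of large numbers / the elementary renewal theorem, since $\mu=\Erw|\log W|<\infty$. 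Hence the number of boxes with at most $[n^{t}]$ balls should behave like $\tfrac{1}{\mu}\big((\log n)-(1-t)\log n\big)=\tfrac{t}{\mu}\log n$, and dividing by $K_{n}^{*}\sim\tfrac{1}{\mu}\log n$ yields the limit $t$.

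Concretely, first I would fix a small $\eps>0$ and split the range $r\in\{1,\dots,n\}$ at a threshold $m=m_{n}\to\infty$ slowly (e.g.\ $m_{n}=[(\log n)^{\delta}]$ for small $\delta>0$), so that $K_{n}^{*}(t)=\sum_{r\le m_{n}}K_{n,r}^{*}+\sum_{m_{n}<r\le[n^{t}]}K_{n,r}^{*}$. By the criterion for weak convergence of $K_{n}^{*}$ from \cite{GneIksMar:10} and the result of \cite{GneIksRos:08} recalled above, when $\mu<\infty$ one has $K_{n}^{*}/\log n\iprob 1/\mu$ while $\sum_{r\le m_{n}}K_{n,r}^{*}=o_{\Prob}(\log n)$ uniformly (the small-parts counts tighten around their limits and contribute $O_{\Prob}(1)$ for each fixed $r$, and a routine truncation argument controls the sum over $r\le m_{n}$). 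So it suffices to prove, uniformly in $t$,
\begin{equation*}
\frac{1}{\log n}\sum_{m_{n}<r\le[n^{t}]}K_{n,r}^{*}\ \iprob\ \frac{t}{\mu}.
\end{equation*}
For this I would pass to the deterministic-environment picture: conditionally on $(W_{i})$, the quantity $\sum_{r>m_{n}}K_{n,r}^{*}\1_{\{Z_{n,i}^{*}\le[n^{t}]\}}$ counts indices $i$ with $m_{n}<Z_{n,i}^{*}\le[n^{t}]$, and $Z_{n,i}^{*}$ given the frequencies is $\mathrm{Binomial}(n,p_{i}^{*})$. A Chernoff bound shows that for $i$ with $n p_{i}^{*}\ge (\log n)^{2}$ one has $Z_{n,i}^{*}\ge m_{n}$ with overwhelming probability, and for $i$ with $n p_{i}^{*}\le (\log n)^{-2}$ one has $Z_{n,i}^{*}=0$ with overwhelming probability; hence up to $o_{\Prob}(\log n)$ errors the count equals the number of $i$ with $n^{t-1}(\log n)^{-2}\lesssim p_{i}^{*}\lesssim n^{t-1}(\log n)^{2}$ replaced by the sharper event $p_{i}^{*}\le n^{t-1}$ up to boundary boxes, i.e.\ the number $N\big((1-t)\log n\big)$ of indices $i$ with $|\log p_{i}^{*}|\le(1-t)\log n$ minus the analogous quantity with a window of width $O(\log\log n)$, which is negligible. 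Finally, $N(y)$ is the number of visits of the perturbed random walk $\big(\sum_{j<k}|\log W_{j}|+|\log(1-W_{k})|\big)_{k\ge1}$ to $[0,y]$, and by the strong law $N(y)/y\to1/\mu$ a.s.; uniformity in $t\in[0,1]$ follows because $y\mapsto N(y)$ is nondecreasing and $N(y)/y\to 1/\mu$, a classical Dini-type argument (or the functional elementary renewal theorem for perturbed random walks, cf.\ Section~\ref{sec_prw}) upgrading pointwise to uniform convergence on compacts. Combining, $\tfrac{1}{\log n}\sum_{m_{n}<r\le[n^{t}]}K_{n,r}^{*}\to t/\mu$ uniformly in probability, and dividing by $K_{n}^{*}/\log n\to1/\mu$ gives \eqref{df}.

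The main obstacle I anticipate is making the ``$Z_{n,i}^{*}\le[n^{t}]\ \Longleftrightarrow\ p_{i}^{*}\le n^{t-1}$'' replacement uniform in $t\in[0,1]$ \emph{and} uniform over the (random, infinitely many) boxes simultaneously: one must control, with a single exceptional set of small probability, the fluctuations of all the binomials $Z_{n,i}^{*}$ around $np_{i}^{*}$ near the moving boundary $[n^{t}]$ as $t$ ranges over $[0,1]$. The cleanest route is presumably to invoke the uniform approximation of the number of boxes with at most $[n^{t}]$ balls by a counting function of $(p_{k})_{k\ge1}$ announced as the first novel ingredient in the abstract --- i.e.\ the general-Karlin-scheme approximation theorem stated later in the paper --- which packages exactly this $J_{1}$-type comparison uniformly in $t$; with that in hand the proposition reduces to the renewal-theoretic statement $N(y)/y\to1/\mu$ plus the known asymptotics of $K_{n}^{*}$, both of which are standard under $\mu<\infty$.
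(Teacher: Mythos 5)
Your final recommended route --- condition on the environment, invoke the uniform (in $t$) approximation of $K_{n}^{*}(t)$ by the counting function $\rho^{*}(n)-\rho^{*}(n^{(1-t)-})=N(\log n)-N((1-t)\log n-)$ of the perturbed random walk, and then apply the strong law $N(y)/y\to 1/\mu$ upgraded to uniform convergence by a Dini argument --- is exactly the paper's proof (Proposition \ref{approx_gen_bound} via Lemma \ref{bs_approx_lemma}, combined with Proposition \ref{strong} and \eqref{connect}). The preliminary sketch via a threshold $m_{n}$ and the small-parts results of \cite{GneIksRos:08} is not needed and is correctly abandoned in favor of the approximation theorem, so the proposal is essentially the same as the paper's argument.
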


Thus, if $\mu<\infty$, the random distribution function $t\mapsto K^{*}_{n}(t)/K^{*}_{n}$ converges uniformly in probability to the uniform distribution function. This provides a definite answer to the question above, namely, for each $t<s$, $t,s\in [0,1]$, the asymptotic contribution (in probability) of the vector $(K^{*}_{n, [n^t]},\ldots,
K^{*}_{n,[n^s]})$ to $K^{*}_{n}$ equals $s-t$.


\vspace{.1cm}
Let us compare this observation with some results of a similar flavor from the literature and  point out beforehand that $\rho^{*}(x)$, defined by
\begin{equation}\label{rho_def_BS}
\rho^{*}(x)\ :=\ \#\{k\in\N: p^{*}_{k}\ge 1/x\}=\#\{k\in\N:W_{1}\cdot\ldots\cdot W_{k-1}(1-W_{k})\ge 1/x\}
\end{equation}
for $x>0$, exhibits a logarithmic growth (see Proposition \ref{strong} below). Consider now the Karlin occupancy scheme with deterministic or random $p_{k}$ such that $\rho(x)$, defined by
\begin{equation}\label{rho_def}
\rho(x)\ :=\ \#\{k\in\N:p_{k}\ge 1/x\},\quad x>0
\end{equation}
is regularly varying at infinity of index $\alpha$, $\alpha\in(0,1)$. Let $K_{n}$ and $K_{n,r}$ denote the number of occupied boxes and the number of boxes containing exactly $r$ balls, respectively, after $n$ balls have been placed. Then $\lim_{n\to\infty}K_{n,r}/K_{n}=c(r)$ a.s.\ for explicitly known constants $c(r)>0$, see Theorems 8 and 9 in \cite{Karlin:67}, Theorem 2.1 in \cite{GnePitYor:06} and Corollary 21 in \cite{GneHanPit:07} (interesting extensions can be
found in \cite{Schweinsberg:10}). Thus, in sharp contrast to \eqref{df}, the major contribution to $K_{n}$ is made by the small-parts counts.

\vspace{.1cm}
In view of Proposition \ref{probab}, it is natural to ask how fast $K^{*}_{n}(t)/K^{*}_{n}$ approaches uniformity in $D[0,1]$. We will answer this question by first proving a FLT for
the process $(K^{*}_{n}(t))_{t\in [0,1]}$, properly centered and normalized, and then make use of the continuous mapping theorem. Our main results, Theorem \ref{main_theorem_fclt} and Theorem \ref{main2_theorem_fclt} treat the case of finite and infinite $\mu$, respectively. 

\begin{Theorem}\label{main_theorem_fclt}
Assume that 
\begin{equation}\label{finite_moment_{n}u}
\Erw|\log(1-W)|^{a}\,<\,\infty
\end{equation}
for some $a>0$. Set
\begin{align*}
&\hspace{1.5cm}u_{n}(t)\ :=\ \mu^{-1}\int_{(1-t)\log n}^{\log n}\Prob\{|\log(1-W)|\le s\}\ {\rm d}s
\shortintertext{and}
&v_{n}(t)\ :=\ \mu^{-1}\int_{(1-t)\log n}^{\log n}\Prob\{|\log(1-W)|> s\}\ {\rm d}s\ =\ \mu^{-1}t\log n - u_{n}(t)
\end{align*}
for $t\in [0,1]$, where $\mu=\Erw|\log W|$.
\begin{description}[(C.3)]\itemsep3pt
\item[(A1)] If $\sigma^2:=\Var|\log W|<\infty$, then
\begin{equation*}\label{clt_finite_variance}
\left(\frac{K^{*}_{n}(t)-u_{n}(t)}{\sqrt{\mu^{-3}\sigma^2\log n}}\right)_{t\in[0,1]}\ \overset{J_{1}}{\Longrightarrow}\  (B(t))_{t\in[0,1]}
\end{equation*}
as $n\to\infty$, and
$$ \left(\sqrt{\mu\sigma^{-2}\log n}\left(\frac{K^{*}_{n}(t)}{K^{*}_{n}}-t+\frac{v_{n}(t)-tv_{n}(1)}{u_{n}(1)}\right)\!\right)_{t\in[0,1]}\ \overset{J_{1}}{\Longrightarrow}\ (B(t)-tB(1))_{t\in[0,1]}, $$
where $(B(t))_{t\in [0,1]}$ is a standard Brownian motion.
\item[(A2)] If $\sigma^2=\infty$ and
\begin{equation*}\label{normalization_sv_sec_moment}
\Erw(\log W)^{2}\1_{\{|\log W|\le x\}}\ \sim \ \ell(x),\quad x\to\infty,
\end{equation*}
for some $\ell$ slowly varying at infinity, then
\begin{equation*}\label{clt_sv_sec_moment}
\left(\frac{K^{*}_{n}(t)-u_{n}(t)}{\mu^{-3/2}c(\log n)}\right)_{t\in[0,1]}\ \overset{J_{1}}{\Longrightarrow}\ (B(t))_{t\in[0,1]}
\end{equation*}
as $n\to\infty$, and
$$ \left(\frac{\sqrt{\mu}\log n}{c(\log n)}\left(\frac{K^{*}_{n}(t)}{K^{*}_{n}}-t+\frac{v_{n}(t)-tv_{n}(1)}{u_{n}(1)}\right)\!\right)_{t\in[0,1]}\ \overset{J_{1}}{\Longrightarrow}\ (B(t)-tB(1))_{t\in[0,1]}. $$
where 
$c$ is a positive function satisfying
$\lim_{x\to\infty} c(x)^{-2} x\ell(c(x))=1$.
\item[(A3)] If
\begin{equation}\label{normalization_rv_sec_moment}
\Prob\{|\log W|>x\}\ \sim\ x^{-\alpha}\ell(x),\quad x\to\infty,
\end{equation}
for some $\alpha\in (1,2)$ and some $\ell$ slowly varying at
infinity, then
\begin{equation*}\label{clt_rv_sec_moment}
\left(\frac{K^{*}_{n}(t)-u_{n}(t)}{\mu^{-(\alpha+1)/\alpha}c(\log
n)}\right)_{t\in [0,1]}\ \overset{M_{1}}{\Longrightarrow}\ 
(S_{\alpha}(t))_{t\in[0,1]}
\end{equation*}
as $n\to\infty$, and
$$ \left(\frac{\mu^{1/\alpha}\log n}{c(\log
n)}\left(\frac{K^{*}_{n}(t)}{K^{*}_{n}}-t+\frac{v_{n}(t)-tv_{n}(1)}{u_{n}(1)}\right)\right)_{t\in[0,1]}\ \overset{M_{1}}{\Longrightarrow}\ 
(S_{\alpha}(t)-tS_{\alpha}(1))_{t\in[0,1]},
$$
where $c$ is a positive function satisfying
$\lim_{x\to\infty} c(x)^{-\alpha} x\ell(c(x))=1$ and
$(S_{\alpha}(t))_{t\in [0,1]}$ is a spectrally negative $\alpha$-stable L\'{e}vy process such that $S_{\alpha}(1)$ has characteristic function
\begin{equation}\label{cf_st}
u\ \mapsto\ \exp\{-|u|^\alpha
\Gamma(1-\alpha)(\cos(\pi\alpha/2)+i\sin(\pi\alpha/2)\, {\rm
sgn}(u))\}, \ u\in\R,
\end{equation}
with $\Gamma$ being the gamma function.
\end{description}
\end{Theorem}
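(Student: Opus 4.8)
The idea is to transfer the problem to a \emph{perturbed random walk} (PRW) and to prove the three functional limit theorems there, relying on the two ingredients announced above. Put $\xi_k:=|\log W_k|$, $\eta_k:=|\log(1-W_k)|$, $S_0:=0$, $S_k:=\xi_1+\dots+\xi_k$ and $T_k:=S_{k-1}+\eta_k$ for $k\in\N$; then $T_k=-\log p^*_k$ by \eqref{BS_frequencies}, so that, with
$$ N(x)\ :=\ \#\{k\in\N:T_k\le x\}\ =\ \rho^*(e^x),\qquad x\ge 0, $$
the number of visits of $(T_k)$ to $[0,x]$ (cf.\ \eqref{rho_def_BS}), one has the identity $\rho^*(n)-\rho^*(n^{1-t})=N(\log n)-N((1-t)\log n)$. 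Let $b_n$ denote the normalizing constant in (A1)--(A3). Applying the first ingredient --- the uniform-in-$t$ approximation of occupancy counts in a Karlin scheme by a counting function of the frequencies --- \emph{conditionally on} $(W_k)_{k\ge1}$, i.e.\ to the Karlin scheme with the (then deterministic) frequencies $p^*_k$, one obtains
$$ \sup_{t\in[0,1]}\Big|K^*_n(t)-\big(N(\log n)-N((1-t)\log n)\big)\Big|\ =\ o_{\Prob}(b_n), $$
where the right-hand side uses that, by Proposition~\ref{strong}, the counting function $\rho^*$ grows only logarithmically; this forces the number of ``boundary'' boxes --- those whose occupancy $Z^*_{n,i}$ may lie on the wrong side of the level $1$ or of $[n^t]$, together with the $\log$-corrections appearing in the first ingredient --- to contribute only $O(\log\log n)=o(b_n)$, uniformly in $t$. (This is precisely what dispenses with the Poissonization--de-Poissonization step.) Hence it suffices to prove the three FLTs for $t\mapsto N(\log n)-N((1-t)\log n)$ centered at $u_n(t)$.

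A centering adjustment comes first. Since $\Erw N(x)=\sum_{k\ge0}\Erw[G(x-S_k)]=\int_{[0,x]}G(x-y)\,{\rm d}U(y)$, where $G(s):=\Prob\{\eta\le s\}$ and $U$ is the renewal function of $(S_k)$, an integration by parts together with the bound $\sup_{y\le\log n}|U(y)-y/\mu|=o(b_n)$ (the supremum is bounded when $\sigma^2<\infty$, and of order $o(c(\log n))$ in the remaining two regimes) yields $\sup_{t}|\Erw[N(\log n)-N((1-t)\log n)]-u_n(t)|=o(b_n)$, so $u_n(t)$ may be replaced by the exact mean. By the second ingredient --- the FLTs for the number of visits of a PRW to $[0,\,\cdot\,]$ --- applied with the PRW scale parameter $x=\log n$, we have, with $Y_n(s):=(N(sx)-\Erw N(sx))/b_n$,
$$ \big(Y_n(s)\big)_{s\in[0,1]}\ \Longrightarrow\ \big(X(s)\big)_{s\in[0,1]}, $$
where $X$ is a standard Brownian motion in case (A1) ($b_n=\sqrt{\mu^{-3}\sigma^2\log n}$; the classical renewal FCLT) and in case (A2) ($b_n=\mu^{-3/2}c(\log n)$; the normal domain of attraction), the convergence being in the $J_1$-topology, while $X$ is the spectrally negative $\alpha$-stable process with characteristic exponent \eqref{cf_st} in case (A3) ($b_n=\mu^{-(\alpha+1)/\alpha}c(\log n)$; the $\alpha$-stable domain of attraction), the convergence then holding only in the $M_1$-topology --- indeed an upward jump of the $\alpha$-stable weak limit of the centered and normalized random walk $(S_k)$ corresponds, after passing to the visit process, to a steep (in the limit, instantaneous) but \emph{continuous} descent of $Y_n$ rather than to a jump, and continuous paths can approach a path with a jump only in the $M_1$-sense. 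The perturbation $(\eta_k)$ affects $N$ only through the nonnegative quantity $\#\{k\ge0:x-\eta_{k+1}<S_k\le x\}$, whose centered version is $o_{\Prob}(b_n)$ uniformly in $x\le\log n$, and this is the sole place where $\Erw|\log(1-W)|^a<\infty$ is needed.

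To conclude the first assertion, one checks that $(K^*_n(t)-u_n(t))/b_n=\phi(Y_n)(t)+o_{\Prob}(1)$ uniformly in $t$, where $\phi$ is the time-reversal map $\phi(g)(t):=g(1)-g((1-t)-)$, which sends $D[0,1]$ into itself and is continuous, in both the $J_1$- and the $M_1$-topology, at paths having no jump at $0$ or $1$ --- a property $X$ enjoys almost surely. The continuous mapping theorem then gives $(K^*_n(t)-u_n(t))/b_n\Longrightarrow\big(X(1)-X((1-t)-)\big)_{t\in[0,1]}$ in the corresponding topology; and since the time-reversal of a L\'evy process over $[0,1]$ has the same law as the process, the limit equals $(B(t))_{t\in[0,1]}$ in (A1), (A2) and $(S_\alpha(t))_{t\in[0,1]}$ in (A3). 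For the second assertion, write $K^*_n(t)=u_n(t)+b_n\widetilde Y_n(t)$ with $\widetilde Y_n\Rightarrow\widetilde X:=\phi(X)$, note $u_n(t)=\mu^{-1}t\log n-v_n(t)$, $u_n(1)\sim\mu^{-1}\log n$, $b_n=o(\log n)$ and $u_n(t)/u_n(1)=t-(v_n(t)-tv_n(1))/u_n(1)$, and expand $K^*_n(t)/K^*_n(1)$ to first order (uniformly in $t$, via Slutsky's lemma); this gives
$$ \frac{u_n(1)}{b_n}\left(\frac{K^*_n(t)}{K^*_n(1)}-t+\frac{v_n(t)-tv_n(1)}{u_n(1)}\right)\ \Longrightarrow\ \big(\widetilde X(t)-t\,\widetilde X(1)\big)_{t\in[0,1]}. $$
Identifying $u_n(1)/b_n$ with the prefactors $\sqrt{\mu\sigma^{-2}\log n}$, $\sqrt{\mu}\,\log n/c(\log n)$ and $\mu^{1/\alpha}\log n/c(\log n)$, and using $\widetilde X(1)=X(1)$ and $\widetilde X\eqdist X$, the limit is the L\'evy bridge $(X(t)-tX(1))_{t\in[0,1]}$ --- a Brownian bridge in (A1), (A2), a stable bridge in (A3).

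The main obstacle is the first ingredient: establishing, for a general Karlin scheme with deterministic frequencies, a \emph{uniform in $t\in[0,1]$} approximation of the occupancy count by a pure counting function of the frequencies, with an error negligible against all three $b_n$. The difficulty is that the two thresholds defining $K^*_n(t)$, namely $Z^*_{n,i}\ge1$ and $Z^*_{n,i}\le[n^t]$, coalesce as $t\downarrow0$, where the Gaussian heuristic for a Binomial occupancy near the upper threshold breaks down, so one must control simultaneously the number and the aggregated random fluctuation of all boundary boxes over the entire interval $t\in[0,1]$. A secondary difficulty is the $M_1$- rather than $J_1$-bookkeeping in case (A3), which must be tracked through both the PRW limit theorem and the time-reversal step.
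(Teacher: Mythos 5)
Your proposal is correct and follows essentially the same route as the paper: condition on $(W_k)$ and apply the uniform Karlin approximation (Proposition \ref{approx_gen_bound} together with Lemma \ref{bs_approx_lemma}) to reduce $K^*_n(t)$ to $\rho^*(n)-\rho^*(n^{(1-t)-})=N(\log n)-N(((1-t)\log n)-)$, invoke the perturbed-random-walk FLTs of Theorem \ref{prw_flt}, pass through the time-reversal map by the continuous mapping theorem using the Lévy time-reversal identity, and obtain the bridge statement for $K^*_n(t)/K^*_n$ by the expansion \eqref{imp} plus \eqref{kn} and Slutsky. The only (harmless) organizational difference is that you center $N$ at its exact mean and then argue $\sup_{y\le\log n}|U(y)-y/\mu|=o(b_n)$, whereas the paper's PRW theorem is already stated with the centering $\sfm^{-1}\int_0^{\cdot}F(u)\,{\rm d}u$, whose time-reversed difference is exactly $u_n(t)$.
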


\begin{Rem}\rm
Along similar lines as in \cite{GneIksMar:10} where weak convergence of $K^{*}_{n}$ is proved, it can be checked that moment condition \eqref{finite_moment_{n}u} is not needed to ensure weak convergence of the \emph{finite-dimensional distributions} of $(K^{*}_{n}(t))_{t\in[0,1]}$. Our proof of Theorem \ref{main_theorem_fclt} is based on the decomposition
\begin{align}
K^{*}_{n}(t)-u_{n}(t)\ &=\ \big(K^{*}_{n}(t)-\big(\rho^{*}(n)-\rho^{*}\big(n^{(1-t)-}\big)\big)\big)\notag\\
&\hspace{1.5cm}+\ \big(\rho^{*}(n)-\rho^{*}\big(n^{(1-t)-}\big)-u_{n}(t)\big)\label{repr}
\end{align}
with $\rho^{*}(x)$ as defined in \eqref{rho_def_BS}. It will be shown that, irrespective of \eqref{finite_moment_{n}u}, the first term on the right-hand side of \eqref{repr}, properly normalized, converges to zero uniformly in
probability. However, for dealing with the second term, we need condition \eqref{finite_moment_{n}u} (see the proof of Theorem \ref{prw_flt} below) but do not know whether it is really necessary.
\end{Rem}

\begin{Rem}\rm
Whenever the second-order term $v_{n}(t)$ of the centering of $K_{n}^{*}(t)$ is killed by
the normalizing constants which is the case, for instance, if $\Erw|\log (1-W)|<\infty$, the ``true'' centering for $K_{n}^{*}(t)$ is $\mu^{-1} t\log n$. Similarly, the term $\frac{v_{n}(t)-tv_{n}(1)}{u_{n}(1)}$ may then be omitted in the limit theorems for $K^{*}_{n}(t)/K^{*}_{n}$ because it vanishes asymptotically when multiplied by the corresponding normalization.

Assume now that $W$ has a beta distribution with parameters $\theta>0$ and $1$, giving
$$ \Erw|\log W|\,=\,\theta^{-1},\quad\Var|\log W|\,=\,\theta^{-2}\quad\text{and}\quad\Erw|\log(1-W)|^{a}\,<\,\infty\quad\text{for all }a>0. $$
Then part (A1) of Theorem \ref{main_theorem_fclt} together with the preceding remark yields
$$ \bigg(\frac{K^{*}_{n}(t)-\theta t\log n}{\sqrt{\theta\log n}}\bigg)_{t\in [0,1]}\ \overset{J_{1}}{\Longrightarrow}\ (B(t))_{t\in [0,1]},\quad n\to\infty, $$ 
and in view of \eqref{Ewens_as_BS}, this limit relation is equivalent to
\eqref{ESF_FLT}. Thus, we found yet another proof of \eqref{ESF_FLT}. In fact, our Theorem \ref{main_theorem_fclt} constitutes a generalization of \eqref{ESF_FLT}.

\vspace{.1cm}
As shown in \cite{GneIksMar:12} and \cite{GneIksRos:08}, respectively, similar generalizations exist for the Erd\"os-T\'uran law for the order of Ewens permutations \cite[Theorem 5.15 on p.~116]{ArrBarTav:03} and for the weak laws for small cycles in Ewens permutations \cite[Theorem 5.1 on p.~96]{ArrBarTav:03}. On the other hand, the independence-based tools used in the proofs related to these permutations \cite[Section 5]{ArrBarTav:03} are no longer available in the more general framework of the Bernoulli sieve and must therefore be replaced by methods of advanced renewal theory.
\end{Rem}

\begin{Theorem}\label{main2_theorem_fclt}
If relation \eqref{normalization_rv_sec_moment} holds with $\alpha\in (0,1)$, then
\begin{equation}\label{clt_{i}nf_exp}
\bigg(\frac{\ell(\log n)K^{*}_{n}(t)}{(\log n)^{\alpha}}\bigg)_{t\in[0,1]}\ \overset{J_{1}}{\Longrightarrow}\ \big(W^{\leftarrow}_{\alpha}(1)-W^{\leftarrow}_{\alpha}((1-t)-)\big)_{t\in[0,1]}
\end{equation}
as $n\to\infty$, and
\begin{equation}\label{infinite}
\bigg({K^{*}_{n}(t)\over
K^{*}_{n}}\bigg)_{t\in[0,1]}\ \overset{J_{1}}{\Longrightarrow}\ 
\bigg(1-{W^{\leftarrow}_{\alpha}((1-t)-)\over
W_\alpha^\leftarrow(1)}\bigg)_{t\in[0,1]},
\end{equation}
where $W^{\leftarrow}_{\alpha}(t):=\inf\{s\ge 0:W_{\alpha}(s)>t\}$ for $t\ge 0$ and $(W_{\alpha}(s))_{s\ge 0}$ is an $\alpha$-stable subordinator (nondecreasing L\'{e}vy
process) with Laplace exponent $-\log \Erw(-zW_{\alpha}(1))=\Gamma(1-\alpha)z^{\alpha}$, $z\ge 0$.
\end{Theorem}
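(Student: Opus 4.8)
\textbf{Proof strategy for Theorem \ref{main2_theorem_fclt}.}
The plan is to run exactly the same two-step program that underlies Theorem \ref{main_theorem_fclt}, namely to exploit the decomposition \eqref{repr} and to show separately that (i) the ``approximation error''
$K^{*}_{n}(t)-\bigl(\rho^{*}(n)-\rho^{*}(n^{(1-t)-})\bigr)$ is asymptotically negligible after dividing by $(\log n)^{\alpha}/\ell(\log n)$, uniformly in $t\in[0,1]$, and (ii) the process $t\mapsto \rho^{*}(n)-\rho^{*}(n^{(1-t)-})$, normalized by $\ell(\log n)/(\log n)^{\alpha}$, converges in $D[0,1]$ with the $J_{1}$-topology to $\bigl(W^{\leftarrow}_{\alpha}(1)-W^{\leftarrow}_{\alpha}((1-t)-)\bigr)_{t\in[0,1]}$. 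Step (i) should follow from the uniform approximation announced in the abstract as the ``first novel ingredient'': for a general Karlin scheme the number of boxes with at most $[n^{t}]$ balls is uniformly close to a counting function defined in terms of $(p_{k})_{k\ge1}$; in the Bernoulli sieve this counting function is precisely $\rho^{*}(n)-\rho^{*}(n^{(1-t)-})$. Here, since $\Erw|\log W|=\infty$ under \eqref{normalization_rv_sec_moment} with $\alpha\in(0,1)$, no moment condition like \eqref{finite_moment_{n}u} is available, but the required negligibility is of a crude order $o\bigl((\log n)^{\alpha}/\ell(\log n)\bigr)$ in probability, which is weaker than what was needed in the finite-mean regime, so I expect the approximation lemma to deliver it directly.

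For step (ii), the key is to recognize $\rho^{*}(x)$ as the number of visits of the perturbed random walk $\bigl(|\log V_{k-1}|+|\log(1-W_{k})|\bigr)_{k\ge1}$ — equivalently, the number of $k$ with $W_{1}\cdots W_{k-1}(1-W_{k})\ge 1/x$ — to the interval $[0,\log x]$. This is exactly the object for which the ``second novel ingredient'', the FLTs for the number of visits to $[0,nt]$ by a perturbed random walk (Theorem \ref{prw_flt} and its relatives), is designed. Under \eqref{normalization_rv_sec_moment} with $\alpha\in(0,1)$ the underlying i.i.d.\ increments $|\log W_{i}|$ have infinite mean and lie in the domain of attraction of a positive $\alpha$-stable law, so the associated renewal counting process, run over time scale $\log n$ and normalized by $\ell(\log n)/(\log n)^{\alpha}$, converges in $J_{1}$ to the inverse $\alpha$-stable subordinator $W^{\leftarrow}_{\alpha}$; the perturbation $|\log(1-W_{k})|$ does not affect the scaling and only needs a one-big-jump/tightness argument to be absorbed. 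Writing the joint functional statement in the time variable $s=(1-t)\log n$ and reading it backwards from $1$ — i.e.\ composing with the reflection $t\mapsto 1-t$ and taking left limits, which is a homeomorphism of $D[0,1]$ in the $J_{1}$-topology when the limit has no fixed discontinuities — turns $\bigl(\rho^{*}(n^{s})\bigr)_{s\in[0,1]}$ into the claimed limit shape $\bigl(W^{\leftarrow}_{\alpha}(1)-W^{\leftarrow}_{\alpha}((1-t)-)\bigr)_{t\in[0,1]}$.

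Finally, \eqref{infinite} follows from \eqref{clt_{i}nf_exp} by the continuous mapping theorem: dividing $K^{*}_{n}(t)$ by $K^{*}_{n}=K^{*}_{n}(1)$ cancels the normalization $\ell(\log n)/(\log n)^{\alpha}$, and the map $D[0,1]\ni x\mapsto x/x(1)\in D[0,1]$ is continuous at limit paths with $x(1)>0$ a.s., which holds here because $W^{\leftarrow}_{\alpha}(1)>0$ a.s. One should note that $t=1$ corresponds to $(1-t)=0$ and $\rho^{*}(n^{0-})=\rho^{*}(0)=0$, so the endpoint values are $W^{\leftarrow}_{\alpha}(1)$ and $1$ respectively, consistent with the statement; a small point to check is that the left-limit at the left endpoint is interpreted as $0$, which is built into the convention for the inverse subordinator.

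\textbf{Main obstacle.} The delicate part is step (ii): establishing the \emph{functional} (not merely one-dimensional or finite-dimensional) convergence of the perturbed-random-walk visit-counting process in the $J_{1}$-topology, including tightness, and verifying that the perturbation terms $|\log(1-W_{k})|$ — which have no moment assumption whatsoever in this regime — do not create extra jumps or destroy tightness. This is where the heavy renewal-theoretic machinery of the paper (the FLTs for perturbed random walks) must be invoked in full strength; everything else, including the passage from $\rho^{*}$ to $K^{*}_{n}(t)$ via the uniform approximation and the final continuous-mapping step, is comparatively routine.
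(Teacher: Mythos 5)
Your proposal is correct and follows essentially the same route as the paper: the decomposition \eqref{repr} together with Proposition \ref{approx_gen_bound} and Lemma \ref{bs_approx_lemma} (which indeed need no moment condition on $|\log(1-W)|$ and give negligibility against the normalization $(\log n)^{\alpha}/\ell(\log n)$), the identification $\rho^{*}(x)=N(\log x)$ and part (B4) of Theorem \ref{prw_flt} for the perturbed-random-walk visit count, the time-reversal via the continuous mapping theorem, and then the ratio \eqref{infinite} by continuous mapping using $W^{\leftarrow}_{\alpha}(1)>0$ a.s.\ (the paper phrases this as joint convergence of the pair and continuity of the product map, which is equivalent to your $x\mapsto x/x(1)$ argument). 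The one step you flag as the main obstacle, the functional convergence in (B4), is in the paper simply delegated to an external result (Corollary 2.6 in \cite{IksMarMei:16}), so no additional work is required there.
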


In the remaining part of this section, we briefly describe our approach and the organization of the paper.

\vspace{.1cm}
The following heuristic sheds some light on the asymptotic behavior of the number $K_{n}$ of occupied boxes in the Karlin scheme. Given $(p_{j})_{j\in\N}$, call a box $k$ large if $p_{k}\ge 1/n$. On average, a large box $k$ is occupied because the (conditional) mean number of balls in it is $np_{k}\ge 1$. One may therefore expect that $K_{n}$ is asymptotically close to the number of large boxes, which is $\rho(n)$ (see \eqref{rho_def} for the definition). For the Bernoulli sieve, this heuristic was justified in \cite{GneIksMar:10} by showing that $K^{*}_{n}=K^{*}_{n}(1)$, properly centered and normalized, converges weakly iff $\rho^{*}(n)$, defined in \eqref{rho_def_BS} and centered and normalized by the same constants, converges weakly to the same law. From this, one may expect that $(K^{*}_{n}(t))_{t\in [0,1]}$ is well-approximated by $(\rho^{*}(n^{t}))_{t\in [0,1]}$, but this turns out
to be wrong. It will actually be shown in Section \ref{sec_approx} that the time-reversal
$$ \rho^{*}(n)-\rho^{*}(n^{(1-t)-})\ =\ \#\{k\in\N: n^{-1} < p^{*}_{k}\le n^{t-1}\},\quad t\in[0,1] $$ provides a uniform approximation for $(K^{*}_{n}(t))_{t\in [0,1]}$ which is tight enough to derive that $(K^{*}_{n}(t))_{t\in [0,1]}$, properly centered and normalized, converges weakly in the Skorohod space if the same is true for $(\rho^{*}(n)-\rho^{*}(n^{(1-t)-}))_{t\in [0,1]}$. This
new observation allows us to replace the existing methods based on Poissonization-de-Poissonization used in earlier works on the Bernoulli sieve and constitutes the first
principal contribution of the present paper. We stress that our Lemma \ref{approx_gen_bound} essentially shows that the behavior of $K_{n}(t):=\sum_{k=1}^{[n^{t}]}K_{n,r}$ for large $n$ is driven by that of $\rho(n)-\rho(n^{(1-t)-})$ for any Karlin occupancy scheme with deterministic or random $(p_{k})_{k\in\N}$ provided that $\rho(x)$ exhibits a
logarithmic growth and its increments satisfy an additional condition.

\vspace{.1cm}
Once a functional limit theorem for $(\rho^{*}(n^{t}))_{t\in [0,1]}$ has been proved, the corresponding functional limit theorem for $(\rho^{*}(n)-\rho^{*}(n^{(1-t)-}))_{t\in [0,1]}$
follows by an application of the continuous mapping theorem. Put
\begin{equation}\label{tstar_def}
T^{*}_{n}\ :=\ |\log W_{1}|+\ldots+|\log W_{n-1}|+|\log (1-W_{n})|,\quad n\in\N
\end{equation}
and note that $\rho^{*}(n^{t})=\#\{k\in\N: T^{*}_{k}\le t\log n\}$ equals the number of visits of the \emph{perturbed random walk} $(T^{*}_{n})_{n\in\N}$ to the interval $[0, t\log n]$. Theorem \ref{prw_flt} stated in Section \ref{sec_prw} provides several functional limit theorems for the number of visits of a general perturbed random walk, not necessarily related to the Bernoulli sieve. Being of independent interest, this result is the second principal contribution of the present paper.

\section{Perturbed random walks}\label{sec_prw}

Let $(\xi_{k},\eta_{k})_{k\in\N}$ be a sequence of independent copies of a $\R^{2}$-valued random vector $(\xi,\eta)$ with positive components. Set $S_{0}:=0$, $S_{n}:=\xi_{1}+\ldots+\xi_{n}$, $n\in\N$ and
then
\begin{equation*}\label{prw_def} T_{n}:=S_{n-1}+\eta_{n},\quad n\in\N.
\end{equation*}
The sequence $(T_{n})_{n\in\N}$ is called a \emph{perturbed random walk} and has recently attracted some interest in the literature, see \cite{AlsIksMei:15} and the references therein. Let $N(x)$ denote the number of visits of $(T_{n})_{n\in\N}$ to the interval
$[0,\,x]$, i.e.,
\begin{equation*}
N(x)\ :=\ \sum_{k\ge 1}\1_{\{T_{k}\le x\}}\ =\ \sum_{k\ge 0}\1_{\{S_{k}+\eta_{k+1}\le x\}},\quad x\ge 0.
\end{equation*}
We start with an assertion that will be used in the proof of Proposition \ref{probab}.

\begin{Prop}\label{strong}
If $\sfm:=\Erw\xi<\infty$, then
\begin{equation}\label{str}
\lim_{n\to\infty}\ \sup_{t\in [0,1]}\,\bigg|{\sfm(N(n)-N(n(1-t)-)\over n}-t\,\bigg|\ =\ 0\quad \text{a.s.}
\end{equation}
\end{Prop}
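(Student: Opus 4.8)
\textbf{Proof plan for Proposition \ref{strong}.}
The plan is to reduce the claim to the classical strong law of large numbers for the renewal counting function of the underlying random walk $(S_n)_{n\ge 0}$, with the perturbation $\eta$ treated as a negligible boundary effect. Write $\nu(x):=\#\{k\ge 0:S_k\le x\}$ for the renewal counting function. Since $\sfm=\Erw\xi\in(0,\infty)$, the elementary renewal-type strong law gives $\nu(x)/x\to 1/\sfm$ a.s.\ as $x\to\infty$, and in fact, because $x\mapsto\nu(x)$ is nondecreasing, this upgrades to the \emph{locally uniform} statement $\sup_{s\in[0,1]}|\sfm\,\nu(xs)/x - s|\to 0$ a.s.\ via a standard monotonicity/Dini argument (partition $[0,1]$ into finitely many subintervals and use monotonicity between gridpoints; the limit $s\mapsto s$ is continuous, so the oscillation control is uniform). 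The same locally uniform SLLN holds for the time-reversed increment $\sfm\,\nu(x)/x - \sfm\,\nu(x(1-t))/x \to t$, uniformly in $t\in[0,1]$.

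Next I would compare $N$ with $\nu$. Here the key point is a sandwich: since $\eta_{k+1}>0$ we have $T_k = S_{k-1}+\eta_k$, so the event $\{T_k\le x\}$ is controlled by the positions of the $S_j$. Concretely, one shows that for each fixed $x$, $N(x)\le \nu(x)$ trivially (if $T_k=S_{k-1}+\eta_k\le x$ then $S_{k-1}\le x$), while from below $N(x)\ge \#\{k\ge 0: S_k + \eta_{k+1}\le x\}$, and one bounds the discrepancy $\nu(x)-N(x)$ by the number of indices $k$ with $S_k\le x < S_k+\eta_{k+1}$. A clean way to handle this is to split according to whether $S_k\le x - a$ or $x-a < S_k\le x$ for a threshold $a>0$: the first group contributes to $N(x)$ as soon as $\eta_{k+1}\le a$, the ``bad'' indices with $\eta_{k+1}>a$ number $O(\nu(x)\cdot\Prob\{\eta>a\})$ in an averaged sense, and the second group has size $\nu(x)-\nu(x-a)$. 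Taking $a=a(x)\to\infty$ slowly (e.g.\ $a(x)=\varepsilon x$ and then $\varepsilon\downarrow 0$, or $a(x)=x^{1/2}$), the term $\nu(x)-\nu(x-a)$ is $O(a/\sfm)=o(x)$ a.s.\ by the locally uniform SLLN for $\nu$, and the bad-index count is $o(x)$ a.s.\ because $\#\{k\le \nu(x): \eta_{k+1}>a(x)\}/\nu(x)$ can be controlled by a Borel–Cantelli / SLLN argument using $\eta_k<\infty$ a.s. Hence $N(x)-\nu(x)=o(x)$ a.s., uniformly enough to survive the time-reversal.

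Combining the two ingredients, $\sfm\,N(n)/n - \sfm\,N(n(1-t))/n \to t$ uniformly in $t\in[0,1]$ a.s., which is exactly \eqref{str} after noting that $N(n(1-t)-)$ and $N(n(1-t))$ differ by at most the jump of $N$ at $n(1-t)$, and such jumps are $o(n)$ uniformly (again by the locally uniform SLLN, the increment $\nu(x)-\nu(x-)$ is $o(x)$). The uniformity in $t$ is inherited from the uniformity in the SLLN for $\nu$ plus the $o(n)$ control on $N-\nu$ being itself uniform, since the bad-index bound and the $\nu(n)-\nu(n(1-t)-a)$ bound can be taken uniform over $t\in[0,1]$.

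\textbf{Main obstacle.} The routine part is the SLLN for $\nu$; the delicate part is making the comparison $N\approx\nu$ \emph{uniform} in $t$ and a.s.\ simultaneously, i.e.\ controlling $\sup_{t\in[0,1]}|N(n)-N(n(1-t)-) - (\nu(n)-\nu(n(1-t)-))|=o(n)$ a.s. This requires handling the perturbations $\eta_k$ with large (but a.s.\ finite) values along the \emph{whole} trajectory up to time $\nu(n)$ at once, rather than for a single deterministic $x$; the cleanest route is to first prove $\sup_{x\in[0,n]}(\nu(x)-N(x))=o(n)$ a.s.\ by a two-parameter truncation of $\eta$, and only then reverse time.
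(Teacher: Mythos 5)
Your core argument is the same as the paper's: sandwich $N(x)$ between $\nu(x)$ and $\nu(x-y)$ minus the count of indices $k\le\nu(x)$ with $\eta_{k+1}>y$, apply the strong law for the renewal counting process $\nu$ and the SLLN for the indicators $\1_{\{\eta_k>y\}}$, and let $y\to\infty$ at the end. The one place where you diverge is the uniformity step, which you single out as the main obstacle and propose to attack by proving $\sup_{x\in[0,n]}(\nu(x)-N(x))=o(n)$ a.s.\ via a two-parameter truncation of $\eta$. This is more work than is needed: the paper only proves the \emph{pointwise} limit $N(ns)/n\to s/\sfm$ (and the same for $N(ns-)$, using $N(ns)-N(ns-)\le 1$) for each fixed $s$, and then observes that $t\mapsto \sfm\,(N(n)-N(n(1-t)-))/n$ is nondecreasing in $t$ while the limit $t\mapsto t$ is continuous on the compact $[0,1]$, so uniform convergence follows from the Dini/P\'olya argument applied directly to the target process. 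You already invoke exactly this monotonicity device for $\nu$, so you could apply it one level up and skip the uniform comparison of $N$ with $\nu$ entirely; with that simplification your proof is correct and coincides with the paper's. A minor caution: if you do keep a moving threshold $a(x)\to\infty$, the bad-index count $\#\{k\le\nu(x):\eta_{k+1}>a(x)\}$ is no longer an i.i.d.\ indicator sum with a fixed success probability, so you should dominate it by the count at a fixed level $A$ (valid once $a(x)>A$) and let $A\to\infty$ afterwards — i.e.\ exactly the fixed-$y$-then-$y\to\infty$ order of limits used in the paper.
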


If $\Erw\eta<\infty$, the weak convergence of the finite-dimensional distributions of $(N(nt))_{t\ge 0}$, properly centered and normalized, follows from Theorem 2.4 in \cite{IksMarMei:16}, see also Example 3.2 there. Theorem \ref{prw_flt} given next is an extension of the aforementioned result to convergence in the Skorohod space. The standard approach to such a strengthening would be to prove tightness. Since this turned out beyond our reach we use an alternative approach.

\begin{Theorem}\label{prw_flt}
Let $T>0$ and $F(x)=\Prob\{\eta\le x\}$, $x\ge 0$. In (B1), (B2) and (B3) below, assume further that $\Erw \eta^{a}<\infty$ for some $a>0$.
\begin{description}[(A4)]\itemsep4pt
\item[(B1)] If $\sfs^2:={\rm Var}\,\xi<\infty$, then
\begin{equation}\label{prw_finite_variance}
\bigg(\frac{N(nt)-\sfm^{-1}\int_{0}^{nt}F(u)\,{\rm d}u}{\sqrt{\sfm^{-3}\sfs^2
n}}\bigg)_{t\in[0,T]}\ \overset{J_{1}}{\Longrightarrow}\ (B(t))_{t\in[0,T]}
\end{equation}
as $n\to\infty$, where $\sfm=\Erw \xi<\infty$ and $(B(t))_{t\in [0,T]}$ is a standard Brownian motion.
\item[(B2)] If $\sfs^2=\infty$ and
\begin{equation*}\label{prw_{n}ormalization_sv_sec_moment}
\Erw \xi^2 \1_{\{\xi\le x\}}\ \sim\ \ell(x),\quad x\to\infty
\end{equation*}
for some $\ell$ slowly varying at infinity, then
\begin{equation}\label{prw_clt_sv_sec_moment}
\bigg(\frac{N(nt)-\sfm^{-1}\int_{0}^{nt}F(u)\,{\rm d}u}{\sfm^{-3/2}c(n)}\bigg)_{t\in[0,T]}\ \overset{J_{1}}{\Longrightarrow}\ (B(t))_{t\in[0,T]}
\end{equation}
as $n\to\infty$, where $c$ is a positive function satisfying $\lim_{x\to\infty}c(x)^{-2}x\ell(c(x))=1$.
\item[(B3)] If
\begin{equation}\label{prw_{n}ormalization_rv_sec_moment}
\Prob\{\xi>x\}\ \sim\ x^{-\alpha}\ell(x),\quad x\to\infty
\end{equation}
for some $\alpha\in (1,2)$ and some $\ell$ slowly varying at infinity, then
\begin{equation}\label{prw_clt_rv_sec_moment}
\bigg(\frac{N(nt)-\sfm^{-1}\int_{0}^{nt}F(u)\,{\rm d}u}{\sfm^{-(\alpha+1)/\alpha}c(n)}\bigg)_{t\in[0,T]}\ \overset{M_{1}}{\Longrightarrow}\ 
(S_{\alpha}(t))_{t\in[0,T]}
\end{equation}
as $n\to\infty$, where $(S_{\alpha}(t))_{t\in [0,T]}$ is an $\alpha$-stable L\'{e}vy process, $S_{\alpha}(1)$ has characteristic function \eqref{cf_st}, and $c$ is a positive function satisfying $\lim_{x\to\infty}c(x)^{-\alpha} x\ell(c(x))=1$.
\item[(B4)] If \eqref{prw_{n}ormalization_rv_sec_moment} holds with $\alpha\in(0,1)$,
then
\begin{equation}\label{prw_clt_{i}nf_exp}
\bigg(\frac{\ell(n)N(nt)}{n^{\alpha}}\bigg)_{t\in[0,T]}\ \overset{J_{1}}{\Longrightarrow}\ (W^{\leftarrow}_{\alpha}(t))_{t\in[0,T]}
\end{equation}
as $n\to\infty$, where $W^{\leftarrow}_{\alpha}(t)$ is as defined in Theorem \ref{main2_theorem_fclt}.
\end{description}
\end{Theorem}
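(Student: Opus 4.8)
The plan is to prove Theorem~\ref{prw_flt} by reducing the study of $N(x)$, the number of visits of the perturbed random walk $(T_n)$ to $[0,x]$, to known functional limit theorems for the first-passage / renewal counting function of the underlying ordinary random walk $(S_n)$. Write $\nu(x):=\#\{k\ge 0: S_k\le x\}$ for the number of visits of $(S_n)_{n\ge0}$ to $[0,x]$. For the ordinary random walk, FLTs for $(\nu(nt))_{t\in[0,T]}$ with the indicated centerings and normalizations are classical and follow by inversion (via the continuous mapping theorem applied to the map sending a function to its generalized inverse) from Donsker-type invariance principles for $(S_{[n\cdot]})$: in case (B1) one gets a Brownian limit, in (B2) a Brownian limit with slowly-varying normalization, in (B3) an $M_1$-limit to the spectrally positive stable process whose inverse is $S_\alpha$ above (note the sign flip: a stable random walk with heavy right tail for $\xi$ produces, after inversion, a spectrally \emph{negative} limit for $\nu$), and in (B4), where $\sfm=\infty$, the renewal function is regularly varying and $(\ell(n)\nu(nt)/n^\alpha)$ converges in $J_1$ to the inverse stable subordinator $W^\leftarrow_\alpha$. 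These are exactly the right-hand sides appearing in (B1)--(B4), so it suffices to show that replacing $\nu$ by $N$ changes nothing in the limit.

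The second and main step is the comparison of $N(x)$ with $\nu(x)$. Because $T_k=S_{k-1}+\eta_k$, a visit of $T$ to $[0,x]$ differs from a visit of $S$ to $[0,x]$ only through the perturbation $\eta_k$; more precisely one has the deterministic sandwich $\nu(x)-\nu'(x)\le N(x)\le \nu(x)$ is \emph{not} quite right, so instead I would work with the exact identity
\begin{equation*}
N(x)-\nu(x)\ =\ \sum_{k\ge 0}\big(\1_{\{S_k+\eta_{k+1}\le x\}}-\1_{\{S_k\le x\}}\big)\ =\ -\sum_{k\ge 0}\1_{\{x-\eta_{k+1}< S_k\le x\}},
\end{equation*}
valid since $\eta\ge0$, and bound the right-hand side, uniformly for $x\in[0,nT]$, by the number of indices $k$ with $S_k$ lying within a "window" of width $\eta_{k+1}$ below the level $x$. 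Summing over the relevant range, this discrepancy is controlled by $\sum_{k: S_k\le nT}\1_{\{\eta_{k+1}>\text{(local gap)}\}}$-type quantities; the key point is that under $\Erw\eta^a<\infty$ for some $a>0$ the perturbations are "not too large" — one has $\max_{k\le C n}\eta_k=o(n^{1/a})$ a.s., hence $o(n^{1/2})$, $o(c(n))$, or $o(n^\alpha)$ after possibly shrinking $a$ — and combined with the fact that $\nu$ does not put too many points in any short interval (which follows from the renewal/stable structure already used in Step~1, e.g.\ a uniform modulus-of-continuity estimate for the limit process plus a one-sided Markov bound on increments of $\nu$), one concludes
\begin{equation*}
\sup_{t\in[0,T]}\big|N(nt)-\nu(nt)\big|\ =\ o_{\Prob}(a_n),
\end{equation*}
where $a_n$ is the normalizing sequence in the respective part. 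In case (B4) where $\sfm=\infty$ the same conclusion holds because $\nu$ grows only like $n^\alpha/\ell(n)$ while the window contributions are of strictly smaller order. A convenient way to package the argument without explicitly estimating clustering of $\nu$ is to note that in every case the already-established limit for $\nu$ implies $\sup_{|t-s|\le\delta}|\nu(nt)-\nu(ns)|/a_n$ is small with high probability for small $\delta$, and then cover $[0,T]$ by $O(1/\delta)$ blocks; the discrepancy $N-\nu$ on each block is at most the number of $\eta_k$'s exceeding the block length times $n$, which is negligible.

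Once $\sup_{t}|N(nt)-\nu(nt)|=o_{\Prob}(a_n)$ is in hand, the FLTs for $N$ follow from those for $\nu$ by the converging-together lemma (Slutsky) in $D[0,T]$: uniform (hence $J_1$- and $M_1$-) closeness of the two processes transfers weak convergence, and the centering $\sfm^{-1}\int_0^{nt}F(u)\,{\rm d}u$ versus $\sfm^{-1}nt$ is reconciled by observing $\int_0^{nt}F(u)\,{\rm d}u=nt-\int_0^{nt}\Prob\{\eta>u\}\,{\rm d}u$ and $\int_0^{nt}\Prob\{\eta>u\}\,{\rm d}u=o(a_n)$ uniformly in $t$ again under $\Erw\eta^a<\infty$ (indeed $\int_0^\infty \Prob\{\eta>u\}^{\,}{\rm d}u$ may be infinite but the integral over $[0,nT]$ grows at most like $(nT)^{1-a}\cdot$const$=o(a_n)$ when $a<1$, and is simply bounded when $a\ge1$). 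Thus the centering used for $\nu$ and that stated for $N$ are interchangeable.

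I expect the main obstacle to be making the discrepancy bound $\sup_t|N(nt)-\nu(nt)|=o_{\Prob}(a_n)$ genuinely uniform in $t$ while handling the four different normalization regimes — in particular controlling how many renewals of $(S_n)$ fall into a window of random width $\eta_{k+1}$ just below a moving level $x$, simultaneously over all $x\le nT$. The finite-variance cases (B1), (B2) are the easiest since $\nu$ is then well-approximated by a linear function and a crude second-moment or Kolmogorov-type maximal bound suffices; the stable case (B3) needs a little care because $\nu$ itself has jumps of order $n^{1/\alpha}$ (this is why only $M_1$ holds), so one must argue that the \emph{extra} displacement caused by the $\eta$'s does not create or destroy jumps of that size, which again comes down to $\max_{k\le Cn}\eta_k$ being of smaller order than $c(n)\asymp n^{1/\alpha}/\ell$-corrections; and (B4), with infinite mean, is structurally the simplest because the target is a monotone limit and $J_1$ is available. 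Throughout, the moment assumption $\Erw\eta^a<\infty$ is used precisely to guarantee $\max_{k\le Cn}\eta_k=o(n^{1/a})$, and by choosing $a$ small one can always arrange this to be $o(a_n)$; whether some condition on $\eta$ is truly needed (as opposed to being an artefact of this method) is, as the authors note, left open.
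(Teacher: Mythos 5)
Your reduction to the renewal counting process $\nu$ of the unperturbed walk breaks down exactly where the hypothesis is weakest, namely when $\Erw\eta=\infty$. From your own identity $\nu(x)-N(x)=\sum_{k\ge 0}\1_{\{x-\eta_{k+1}<S_{k}\le x\}}$ one gets $\Erw(\nu(x)-N(x))=\int_{[0,\infty)}\Erw(\nu(x)-\nu(x-y))\,\Prob\{\eta\in{\rm d}y\}$, which by the renewal theorem is of exact order $\sfm^{-1}\Erw(\eta\wedge x)=\sfm^{-1}\int_{0}^{x}(1-F(u))\,{\rm d}u$ --- this systematic bias is precisely why the theorem centers $N(nt)$ by $\sfm^{-1}\int_{0}^{nt}F(u)\,{\rm d}u$ rather than by $\sfm^{-1}nt$. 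Under the stated hypothesis $\Erw\eta^{a}<\infty$ for \emph{some} $a>0$ (which is given to you, possibly arbitrarily small, not chosen by you), $\int_{0}^{n}(1-F(u))\,{\rm d}u$ can be of order $n^{1-a}$ up to logarithms, which for small $a$ dominates $a_{n}\asymp n^{1/2}$ in (B1)--(B2) and $a_{n}=c(n)$, regularly varying of index $1/\alpha<1$, in (B3). Hence neither your claim $\sup_{t}|N(nt)-\nu(nt)|=o_{\Prob}(a_{n})$ nor your reconciliation of the two centerings (``$(nT)^{1-a}\cdot\mathrm{const}=o(a_{n})$ when $a<1$'') is correct. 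The same directional error occurs when you assert $\max_{k\le Cn}\eta_{k}=o(n^{1/a})$, ``hence $o(n^{1/2})$ \ldots after possibly shrinking $a$'': shrinking $a$ makes $n^{1/a}$ \emph{larger}, and $n^{1/a}\gg n^{1/2}$ already for $a<2$.

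The missing idea is to subtract the conditional mean before estimating, so that only genuine fluctuations remain. The paper compares $N(t)$ not with $\nu(t)=\sum_{k\ge0}\1_{\{S_{k}\le t\}}$ but with the renewal shot-noise process $\sum_{k\ge0}F(t-S_{k})\1_{\{S_{k}\le t\}}$, whose FLT with the correct centering $\sfm^{-1}\int_{0}^{nt}F(u)\,{\rm d}u$ is imported from the literature on shot-noise processes (this is a second ingredient your scheme lacks: the FLT for $\nu$ alone does not produce the stated centering). The difference $X(t)=\sum_{k\ge0}\big(\1_{\{S_{k}+\eta_{k+1}\le t\}}-F(t-S_{k})\1_{\{S_{k}\le t\}}\big)$ is the terminal value of a martingale in $k$ whose quadratic characteristics are bounded by $\sum_{k\ge0}(1-F(t-S_{k}))\1_{\{S_{k}\le t\}}$, so the Burkholder--Davis--Gundy inequality yields $\Erw X(t)^{2l}=O\big((\int_{0}^{t}(1-F(u))\,{\rm d}u)^{l}\big)=O(t^{l(1-a)})$; it is the \emph{square root} of $\int_{0}^{t}(1-F(u))\,{\rm d}u$ that is $o(t^{1/2})$ even when the integral itself is not, and this, together with a discretization in $t$ and Borel--Cantelli, gives $\sup_{t\le T}|X(nt)|=o_{\Prob}(n^{1/2})=o_{\Prob}(a_{n})$ in all of (B1)--(B3). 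Your uncentered comparison would only go through under a genuinely stronger hypothesis such as $\int_{0}^{n}(1-F(u))\,{\rm d}u=o(a_{n})$ (e.g.\ $\Erw\eta<\infty$). For (B4) no comparison argument is attempted in the paper at all; that part is quoted directly from the literature and requires no condition on $\eta$.
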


We close this section with two further results that will be needed in our analysis. The first one tells us that the maximal number of visits of a perturbed random walk to subintervals of $[0,n+b]$ of length $b$ grows stochastically more slowly than any positive power of $n$.

\begin{Prop}\label{prw_uniformity}
For all positive $b$ and $c$,
\begin{equation}\label{prw_to_zero_{i}n_probability}
\Plim_{n\to\infty}n^{-c} \sup_{t\in[0,1]}\big(N(nt+b)-N(nt)\big)\ =\ 0.
\end{equation}
\end{Prop}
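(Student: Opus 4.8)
\textbf{Proof plan for Proposition \ref{prw_uniformity}.}
The plan is to bound, for fixed $\eps>0$, the probability $\Prob\{\sup_{t\in[0,1]}(N(nt+b)-N(nt))>\eps n^{c}\}$ and show it tends to zero. The first step is to discretize the supremum over $t\in[0,1]$. Cover $[0,n]$ by the $\lceil n/b\rceil+1$ overlapping intervals $I_{j}:=[jb,\,(j+2)b]$ of length $2b$, $j=0,1,\ldots$; then for every $t\in[0,1]$ the interval $[nt,\,nt+b]$ is contained in some $I_{j}$ with $0\le j\le \lceil n/b\rceil$, so
\begin{equation*}
\sup_{t\in[0,1]}\big(N(nt+b)-N(nt)\big)\ \le\ \max_{0\le j\le \lceil n/b\rceil}\big(N((j+2)b)-N(jb)\big)\ =:\ \max_{0\le j\le \lceil n/b\rceil}D_{j},
\end{equation*}
where $D_{j}$ counts the visits of $(T_{k})_{k\in\N}$ to $I_{j}$. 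A union bound then reduces the claim to showing that $\Prob\{D_{j}>\eps n^{c}\}$ is $o(n^{-1})$ \emph{uniformly in} $j\le \lceil n/b\rceil$; since there are $O(n)$ terms, any bound of the form $\Prob\{D_{j}>x\}\le g(x)$ with $g$ decaying faster than any power (e.g. exponentially or stretched-exponentially) and $x=\eps n^{c}$ will close the argument.

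The second and main step is the tail bound on $D_{j}$, i.e. on the number of visits of the perturbed random walk to a fixed interval of length $2b$. Here one controls $D_{j}=\sum_{k\ge0}\1_{\{S_{k}+\eta_{k+1}\in I_{j}\}}$ by the number of indices $k$ for which the underlying random walk $S_{k}$ lies in the enlarged interval $[jb-2b,\,(j+2)b]$ of length $4b$: indeed $S_{k}+\eta_{k+1}\in I_{j}$ forces $S_{k}\le (j+2)b$, and since $(S_{k})$ is nondecreasing with positive increments, the indices with $S_{k}$ in a window of fixed width $4b$ form a single block of consecutive integers, whose number $R$ we bound by comparing $S_{k}$ with a lower bound obtained by truncating the increments: if we fix a level $\delta>0$ with $\Prob\{\xi>\delta\}=:q>0$, then among any $m$ consecutive increments the number exceeding $\delta$ is at least $m q/2$ except on an event of exponentially small probability (Hoeffding / Chernoff for i.i.d. Bernoulli$(q)$ variables), and once more than $4b/\delta$ of them are large the walk has exited the window. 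Hence $\Prob\{D_{j}>m\}\le \Prob\{R>m\}\le \Prob\{\mathrm{Bin}(m,q)<4b/\delta\}$, which decays exponentially in $m$ uniformly in $j$. (Alternatively, one may invoke that $N(x)-N(x-h)$ has exponential tails uniformly in $x$, a standard fact for perturbed random walks with positive steps; but the elementary Chernoff argument above is self-contained.) It is essential here that the bound does not depend on $j$, which it does not, since the increment distribution is shift-invariant.

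Combining, with $m=\eps n^{c}$ we get $\Prob\{\max_{j\le\lceil n/b\rceil}D_{j}>\eps n^{c}\}\le (\lceil n/b\rceil+1)\,e^{-\kappa \eps n^{c}}\to 0$ for a constant $\kappa=\kappa(\delta,q,b)>0$, which is exactly \eqref{prw_to_zero_in_probability}. The only place that requires a little care is the reduction of ``visits of $(T_k)$ to $I_j$'' to ``values of $(S_k)$ in a slightly larger window'': one must note that $\eta_{k+1}\ge 0$ gives $S_k\le (j+2)b$, but there is no matching lower bound on $S_k$ from $T_k\in I_j$ alone, so one cannot directly restrict $S_k$ to a bounded window. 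This is remedied by observing that the \emph{first} index $k_0$ with $T_{k_0}\in I_j$ has $S_{k_0}=T_{k_0}-\eta_{k_0+1}\le (j+2)b$, and for all subsequent indices $k>k_0$ contributing to $D_j$ one has $S_k\ge S_{k_0}$, so all contributing indices satisfy $S_k\in[S_{k_0},\,(j+2)b]$, a (random-location but) fixed-width window; the Chernoff bound is then applied to the increments after time $k_0$, and a further conditioning/strong-Markov step (the overshoot $S_{k_0}$ is $\mathcal F_{k_0}$-measurable and the future increments are independent of $\mathcal F_{k_0}$) makes this rigorous. I expect this windowing-and-strong-Markov bookkeeping to be the main technical obstacle; everything else is a routine union bound plus a standard large-deviation estimate.
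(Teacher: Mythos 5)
Your discretization and union-bound frame is unobjectionable, but the central step --- dominating the number of visits $D_j$ of the perturbed walk to $I_j=[jb,(j+2)b]$ by the number of indices $k$ with $S_k$ in a \emph{fixed-width} window --- is false, and the repair you sketch does not repair it. From $T_k=S_k+\eta_{k+1}\in I_j$ you only get the one-sided bound $S_k\le (j+2)b$; the window you actually obtain is $[S_{k_0},(j+2)b]$, whose width is $(j+2)b-S_{k_0}$ and can be as large as $(j+2)b\approx n$ (take $k_0=0$, $S_0=0$ and $\eta_1\in I_j$). Proposition \ref{prw_uniformity} is stated with \emph{no} moment assumption on $\eta$, so $\eta$ may be arbitrarily heavy-tailed and the indices contributing to $D_j$ may have $S_k$-values spread over all of $[0,(j+2)b]$; the dominating count in your argument is then of order $j$, and the Chernoff/binomial bound $\Prob\{D_j>m\}\le\Prob\{\mathrm{Bin}(m,q)<4b/\delta\}$ has no basis. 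The parenthetical fallback (``$N(x)-N(x-h)$ has exponential tails uniformly in $x$, a standard fact'') is not a standard fact and is essentially the content of the proposition; the paper never proves it, and only establishes uniformly bounded moments of all orders for a \emph{centered} version of the increment.

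What is actually needed, and what the paper does, is to split the increment into a martingale part and its compensator: $N(t+1)-N(t)=Y(t)+\sum_{k\ge0}\bigl(F(t+1-S_k)\1_{\{S_k\le t+1\}}-F(t-S_k)\1_{\{S_k\le t\}}\bigr)$. The compensator is handled almost surely by the previously proved relation \eqref{prw_to_zero_as12} (via the bound $J(m)+\nu(m+1)-\nu(m)$ and Lemma \ref{impo2}), while $\Erw Y(t)^{2l}=O(1)$ uniformly in $t$ follows from the Burkholder--Davis--Gundy inequality together with Lemma \ref{impo1} applied to $G=F(\cdot+1)-F(\cdot)$, for which $\sum_{j=0}^{[t]}\sup_{y\in[j,j+1)}G(y)\le 2$; Markov's inequality and Borel--Cantelli then give the almost sure statement along integers, from which the supremum bound follows. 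The point you are missing is that the conditional intensity $\sum_{k}\bigl(F(t+1-S_k)-F(t-S_k)\bigr)\1_{\{S_k\le t\}}$ of visits to a unit interval, summed over the \emph{entire} trajectory of $(S_k)$, has all moments bounded uniformly in $t$ even though the set of potentially contributing indices is unbounded; no reduction to a bounded block of consecutive indices is available. To salvage your strategy you would still need exactly this control of the conditional intensity, i.e.\ the heart of the paper's argument.
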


The second result is a straightforward extension of the fact from renewal theory that the expected number of visits of a random walk with positive increments to intervals of length $y$ is bounded by a linear function in $y$.

\begin{Prop}\label{sub}
For all $x,y\ge 0$ and appropriate positive $C$ and $D$,
\begin{equation}\label{sub_con}
\Erw\big(N(x+y)-N(x)\big)\ \le\ Cy+D.
\end{equation}
\end{Prop}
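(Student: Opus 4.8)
The plan is to reduce the estimate, by conditioning on the perturbations, to a renewal-theoretic bound for the ordinary random walk $(S_{n})_{n\ge 0}$, and then to prove that bound using only the positivity of $\xi$, since no integrability assumption on $\xi$ is available here. For the reduction I would write $N(x+y)-N(x)=\sum_{k\ge 0}\1_{\{x<S_{k}+\eta_{k+1}\le x+y\}}$, take expectations (all summands being nonnegative, so Tonelli applies), and use that $S_{k}$ and $\eta_{k+1}$ are independent with $\eta_{k+1}\eqdist\eta$, to obtain
\[
\Erw(N(x+y)-N(x))\ =\ \int_{[0,\infty)}U((x-z,\,x+y-z])\,\Prob\{\eta\in{\rm d}z\},
\]
where $U(\cdot):=\sum_{k\ge 0}\Prob\{S_{k}\in\cdot\}$ is the renewal measure of $(S_{n})_{n\ge 0}$. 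Since every interval $(x-z,x+y-z]$ has length exactly $y$ and $\Prob\{\eta\in\cdot\}$ is a probability measure, it then suffices to prove a bound $U((c,c+y])\le Cy+D$ holding uniformly in $c\in\R$ and $y\ge 0$.

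To obtain this uniform renewal bound I would use positivity of $\xi$: fix $a>0$ with $q:=\Prob\{\xi\le a\}<1$. Since the $\xi_{k}$ are positive, $\{k\ge 0:S_{k}\in(c,c+a]\}\subseteq\{k\ge\sigma:S_{k}-S_{\sigma}<a\}$, where $\sigma:=\inf\{k\ge 0:S_{k}>c\}$ is an a.s.\ finite stopping time (as $S_{n}\to\infty$ a.s.). By the strong Markov property $(S_{\sigma+i}-S_{\sigma})_{i\ge 0}$ is a copy of $(S_{i})_{i\ge 0}$, whence
\[
U((c,c+a])\ =\ \Erw\,\#\{k\ge 0:S_{k}\in(c,c+a]\}\ \le\ \sum_{i\ge 0}\Prob\{S_{i}<a\}\ \le\ \sum_{i\ge 0}q^{i}\ =\ \frac1{1-q},
\]
the last inequality holding because $S_{i}<a$ forces $\xi_{1},\ldots,\xi_{i}\le a$. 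Covering $(c,c+y]$ by at most $\lceil y/a\rceil\le y/a+1$ blocks of length $a$ then gives $U((c,c+y])\le(1-q)^{-1}(y/a+1)$, i.e.\ the claimed bound with $C=(a(1-q))^{-1}$ and $D=(1-q)^{-1}$; substituting this into the integral above and using that $\Prob\{\eta\in\cdot\}$ has total mass $1$ completes the proof.

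I do not anticipate a genuine obstacle here; the point that needs some care is that Proposition~\ref{sub} carries no moment condition on $\xi$, so one cannot appeal to the elementary renewal theorem. This is exactly why the geometric tail bound $\Prob\{S_{i}<a\}\le q^{i}$ (valid for any positive $\xi$) is used in place of any asymptotic statement about $U(t)/t$.
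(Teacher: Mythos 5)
Your proof is correct and follows essentially the same route as the paper's: both integrate out the perturbation $\eta$ (by independence and Tonelli) to reduce the claim to a bound of the form $Cy+D$ on the renewal measure of an arbitrary interval of length $y$, uniformly in its location. The only difference is that the paper obtains this uniform bound by invoking the distributional subadditivity of $\nu$ from Gut's book, which yields subadditivity of the renewal function $U$ and hence $U(x+y-z)-U(x-z)\le U(y)\le Cy+D$, whereas you prove the interval bound from scratch via the strong Markov property at the first passage time over level $c$ together with the geometric estimate $\Prob\{S_{i}<a\}\le q^{i}$ --- which is in substance exactly the argument underlying the subadditivity statement the paper cites.
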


\section{The Karlin occupancy scheme: an approximation result}\label{sec_approx}

In this section, we focus on the Karlin occupancy scheme with \emph{deterministic} $(p_{k})_{k\in\N}$. For $j=1,\ldots, n$, let $Z_{n,j}$ denote the number of balls in the $j^{th}$ box, so that
$$ K_{n}(t)\ :=\ \sum_{j\ge 1}\1_{\{Z_{n,j}\in [1,n^{t}]\}},\quad t\in [0,1], $$
gives the number of occupied boxes containing at most $[n^{t}]$ balls. Proposition \ref{approx_gen_bound} is the first main ingredient to the proof of Theorem \ref{main_theorem_fclt}. The connection with the Bernoulli sieve becomes clear
when conditioning on $(W_{k})$.
\begin{Prop}\label{approx_gen_bound}
Let $\rho(x)$ be as defined in \eqref{rho_def}. Then
\begin{align*}
&\Erw\sup_{t\in[0,1]}\left|K_{n}(t)-\bigg(\rho(n)-\rho\big(n^{(1-t)-}\big)\bigg)\right|\ 
\le\ 6\,\Big(\rho(n)-\rho\big(x_{0}^{-1}n(\log n)^{-2}\big)\Big)\nonumber\\
&\quad+\ \frac{3\rho(n)}{\log n}\ +\ \int_{1}^\infty t^{-2}(\rho(nt)-\rho(n))\ {\rm d}t\ +\ 2\sup_{t\in[0,1]}\Big(\rho(en^{1-t})-\rho(e^{-1}n^{1-t})\Big)\label{final!},
\end{align*}
where $x_{0}>1$ denotes an absolute constant that does not depend on $n$, nor on
$(p_{j})_{j\in\N}$.
\end{Prop}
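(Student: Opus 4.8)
The plan is to compare $K_{n}(t)$ with the ``deterministic skeleton'' $\rho(n)-\rho\big(n^{(1-t)-}\big)$ \emph{box by box}. Since $\rho\big(n^{(1-t)-}\big)=\#\{k:p_{k}>n^{t-1}\}$, one has
\[
\rho(n)-\rho\big(n^{(1-t)-}\big)\ =\ \#\{k\in\N:n^{-1}\le p_{k}\le n^{t-1}\}\ =\ \sum_{k\ge1}\1_{\{1\le np_{k}\le n^{t}\}},
\]
while, each $Z_{n,k}$ being $\N_{0}$-valued, $K_{n}(t)=\sum_{k\ge1}\1_{\{1\le Z_{n,k}\le n^{t}\}}$ with the \emph{same} real threshold $n^{t}$. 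Hence $K_{n}(t)-\big(\rho(n)-\rho(n^{(1-t)-})\big)=\sum_{k\ge1}\Delta_{k}(t)$, where
\[
\Delta_{k}(t)\ :=\ \1_{\{1\le Z_{n,k}\le n^{t}\}}-\1_{\{1\le np_{k}\le n^{t}\}},
\]
and I would bound $\Erw\sup_{t}\big|\sum_{k}\Delta_{k}(t)\big|$ by splitting the index set according to the size of the conditional mean $np_{k}$ into \emph{tiny} boxes ($np_{k}<1$), \emph{moderate} boxes ($1\le np_{k}<x_{0}(\log n)^{2}$), and \emph{large} boxes ($np_{k}\ge x_{0}(\log n)^{2}$), where $x_{0}$ is an absolute constant fixed large enough.

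The tiny and moderate ranges are dispatched by first-moment estimates. For a tiny box $\1_{\{1\le np_{k}\le n^{t}\}}\equiv0$, so $0\le\Delta_{k}(t)\le\1_{\{Z_{n,k}\ge1\}}$ and $\sup_{t}\big|\sum_{k\ \text{tiny}}\Delta_{k}(t)\big|\le\sum_{k\ \text{tiny}}\1_{\{Z_{n,k}\ge1\}}$, an expression with expectation at most $\sum_{k\ \text{tiny}}\Prob\{Z_{n,k}\ge1\}\le\sum_{k\ \text{tiny}}np_{k}$, because $\Prob\{Z_{n,k}\ge1\}=1-(1-p_{k})^{n}\le np_{k}$; summing the identity $\min(np_{k},1)=\int_{1}^{\infty}\1_{\{np_{k}>1/s\}}s^{-2}\,\mathrm{d}s$ over $k$ and subtracting $\rho(n)=\#\{k:np_{k}\ge1\}$ rewrites $\sum_{k\ \text{tiny}}np_{k}$ as $\int_{1}^{\infty}\big(\rho(ns)-\rho(n)\big)s^{-2}\,\mathrm{d}s$. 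For a moderate box $|\Delta_{k}(t)|\le1$, and the number of moderate boxes is \emph{exactly} $\rho(n)-\rho\big(x_{0}^{-1}n(\log n)^{-2}\big)$, so $\sup_{t}\big|\sum_{k\ \text{moderate}}\Delta_{k}(t)\big|$ is bounded pathwise by this count. These two pieces account, with room to spare, for the third and first terms on the right-hand side.

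The large range is the crux. I would introduce the event $\cG:=\bigcap_{k\ \text{large}}\{|Z_{n,k}-np_{k}|\le\delta_{k}\}$ with $\delta_{k}:=2\sqrt{np_{k}\log(n\log n)}$. A Bernstein inequality for the binomial marginal $Z_{n,k}\sim\mathrm{Bin}(n,p_{k})$ gives $\Prob\{|Z_{n,k}-np_{k}|>\delta_{k}\}\le2/(n\log n)$ whenever $np_{k}\ge x_{0}(\log n)^{2}$; since there are at most $n$ large boxes ($\sum_{k}p_{k}=1$ and each such $p_{k}\ge1/n$), $\Prob(\cG^{c})\le2/\log n$, and combined with the trivial bound $\sup_{t}\big|\sum_{k\ \text{large}}\Delta_{k}(t)\big|\le\#\{k\ \text{large}\}\le\rho(n)$ this bounds the large contribution on $\cG^{c}$ by $2\rho(n)/\log n\le3\rho(n)/\log n$ in expectation, which is the second term. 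On $\cG$, every large $k$ satisfies $Z_{n,k}\ge np_{k}-\delta_{k}\ge\tfrac12 np_{k}\ge1$, so $\Delta_{k}(t)\ne0$ forces $Z_{n,k}$ and $np_{k}$ to lie on opposite sides of $n^{t}$ and hence $|np_{k}-n^{t}|\le\delta_{k}$; as the threshold $x_{0}(\log n)^{2}$ makes $\delta_{k}\le\varepsilon_{0}\,np_{k}$ for all large $k$ with $\varepsilon_{0}$ as small as we like (which also rules out any such $k$ unless $n^{t}$ is at least of order $(\log n)^{2}$), such a $k$ must satisfy $e^{-1}n^{t-1}\le p_{k}<e\,n^{t-1}$. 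Therefore, on $\cG$ and for every fixed $t\in[0,1]$,
\begin{align*}
\Big|\sum_{k\ \text{large}}\Delta_{k}(t)\Big|\ &\le\ \#\{k:e^{-1}n^{t-1}\le p_{k}<e\,n^{t-1}\}\ =\ \rho\big(e\,n^{1-t}\big)-\rho\big(e^{-1}n^{1-t}\big)\\
&\le\ \sup_{s\in[0,1]}\Big(\rho\big(e\,n^{1-s}\big)-\rho\big(e^{-1}n^{1-s}\big)\Big).
\end{align*}
Summing the four contributions yields the assertion, the displayed coefficients $6,3,1,2$ leaving ample slack for the rounding and endpoint corrections glossed over above.

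The step I expect to be the main obstacle is precisely this large-box analysis and, within it, the \emph{uniformity in $t$}: the deviation scale $\delta_{k}$ must be small relative to $np_{k}$ so that the ``ambiguous set'' $\{k:|np_{k}-n^{t}|\le\delta_{k}\}$ stays inside a fixed multiplicative window around the critical scale $n^{t-1}$ for \emph{every} $t$, yet large enough that $\Prob(\cG^{c})$ remains of order $1/\log n$; reconciling these competing demands is what dictates the threshold $np_{k}\ge x_{0}(\log n)^{2}$. The companion subtlety is that for $t$ near $0$, where $n^{t}$ is bounded and $Z_{n,k}$ behaves like a Poisson variable of mean $\asymp n^{t}$, the large boxes must be shown never to enter the count at all, so that these genuinely Poissonian fluctuations are absorbed entirely into the moderate term $\rho(n)-\rho\big(x_{0}^{-1}n(\log n)^{-2}\big)$; getting that bookkeeping right is the delicate point.
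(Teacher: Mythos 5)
Your argument is correct and reaches the stated bound with room to spare, but it implements the key step differently from the paper. Both proofs start from the same box-by-box comparison of the events $\{1\le Z_{n,k}\le n^{t}\}$ and $\{1\le np_{k}\le n^{t}\}$, and both isolate the same three error sources: sub-critical boxes with $np_{k}<1$ (handled identically in the two proofs, via $\Prob\{Z_{n,k}\ge 1\}\le np_{k}$ and the layer-cake identity producing $\int_{1}^{\infty}s^{-2}(\rho(ns)-\rho(n))\,{\rm d}s$), a band of near-critical boxes of width $(\log n)^{2}$ producing the term $\rho(n)-\rho(x_{0}^{-1}n(\log n)^{-2})$, and the multiplicative window of boxes with $p_{k}$ within a factor $e$ of $n^{t-1}$. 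The difference is in how the binomial fluctuations are controlled. The paper splits the error into four one-sided sums and bounds the fluctuation pieces by \emph{summing individual Chebyshev bounds} with deviation scale $(np_{j})^{3/4}$; the tail sum $\sum_{j}(np_{j})^{-1/2}\1_{\{np_{j}\in[1,n]\}}$ is what produces the $\rho(n)/\log n$ term, and the elementary inequality $y\le x<y+(x_{0}-1)y^{3/4}$ (whence their specific $x_{0}$, the root of $x-x^{3/4}=1$) converts the additive deviation band into the multiplicative $e$-window. You instead partition the \emph{boxes} by the size of $np_{k}$, dispose of the moderate band by a trivial counting bound, and for the large boxes use Bernstein plus a union bound to build a single good event $\cG$ on which every large box concentrates at scale $\sqrt{np_{k}\log n}$; the $\rho(n)/\log n$ term then arises as $\rho(n)\Prob(\cG^{c})$, and uniformity in $t$ is automatic on $\cG$ since the ambiguous set is deterministically contained in the $e$-window for every $t$ simultaneously. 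Your route buys exponential concentration and a cleaner uniformity argument at the price of a union bound; the paper's route uses only second moments and no good event. Two housekeeping points: your $x_{0}$ is a different absolute constant than the paper's, which the statement permits; and, like the paper (which says so explicitly at the start of its proof), your estimates — e.g.\ $\delta_{k}\le np_{k}$ for large boxes, $x_{0}(\log n)^{2}\ge 4\log(n\log n)$, and $\Prob(\cG^{c})\le 2/\log n$ — require $n$ to be sufficiently large, a proviso you should state.
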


\begin{proof}
Without further notice, all subsequent estimates, including $n^{t}-n^{3t/4}>1$ for $t\in[l_{n},\,1]$ with $l_{n}:=\frac{2\log\log n}{\log n}$, are meant under the proviso that $n$ be sufficiently large. We start with the basic inequality
\begin{align*}
\left|K_{n}(t)-\big(\rho(n)-\rho\big(n^{(1-t)-}\big)\big)\right|\ &\le\ \sum_{j\ge 1}\1_{\{Z_{n,j}>n^{t},\,np_{j}\in [1,n^{t}]\}}\ +\ \sum_{j\ge 1}\1_{\{Z_{n,j}=0,\,np_{j}\in [1,n^{t}]\}}\\
&+\ \sum_{j\ge 1}\1_{\{Z_{n,j}\in [1,n^{t}],\,np_{j} > n^{t}\}}\ +\ \sum_{j\ge 1}\1_{\{Z_{n,j}\in [1,n^{t}],\,np_{j}<1\}}\\
&=:\ S_{n}^{(1)}(t)+S_{n}^{(2)}(t)+S_{n}^{(3)}(t)+S_{n}^{(4)}(t)
\end{align*}
and intend to estimate $\Erw\sup_{t\in[0,1]}S_{n}^{(i)}(t)$ for
$i=1,2,3,4$.

\vspace{.1cm}
As for the first summand, we have
\begin{align*}
&\sup_{t\in[0,1]}\sum_{j\ge 1}\1_{\{Z_{n,j}>n^{t},\,np_{j}\in [1,n^{t}]\}}\\
&\hspace{1cm}\le\ \sup_{t\in[0,\,l_{n})}\sum_{j\ge 1}\1_{\{np_{j}\in [1,n^{t}]\}}\ +\ \sup_{t\in[l_{n},1]}\sum_{j\ge 1}\1_{\{Z_{n,j}>n^{t},\,np_{j}\in [1,n^{t}-n^{3t/4}]\}}\\
&\hspace{1.5cm}+\ \sup_{t\in[l_{n},\,1]}\sum_{j\ge 1}\1_{\{Z_{n,j}>n^{t},\,np_{j}\in (n^{t}-n^{3t/4},\,n^{t}]\}}\\
&\hspace{1cm}\le\ \sum_{j\ge 1}\1_{\{np_{j}\in [1,n^{l_{n}})\}}\ +\ \sup_{t\in[l_{n},\,1]}\sum_{j\ge 1}\1_{\{Z_{n,j}-np_{j}>n^{3t/4},\,np_{j}\in [1,n^{t}-n^{3t/4}]\}}\\
&\hspace{1.5cm}+\ \sup_{t\in[l_{n},\,1]}\sum_{j\ge 1}\1_{\{np_{j}\in (n^{t}-n^{3t/4},\,n^{t}]\}}\\
&\hspace{1cm}\le\ \sum_{j\ge 1}\1_{\{np_{j}\in [1,\,\log^2 n)\}}\ +\ \sum_{j\ge 1}\1_{\{Z_{n,j}-np_{j}>(np_{j})^{3/4},\,np_{j}\in [1,n]\}}\\
&\hspace{1.5cm}+\ \sup_{t\in[l_{n},\,1]}\sum_{j\ge 1}\1_{\{np_{j}\in (n^{t}-n^{3t/4},\,n^{t}]\}}\\
&=:\ S^{(11)}_{n}+S^{(12)}_{n}+S^{(13)}_{n}.
\end{align*}
By the definition of $\rho$,
\begin{equation*}
S^{(11)}_{n}\ =\ \rho(n)-\rho(n(\log n)^{-2}).
\end{equation*}
The random variable $Z_{n,j}$ has a binomial distribution with parameters $n$ and $p_{j}$, so that $\Erw Z_{n,j}=np_{j}$ and $\Var\,Z_{n,j}=np_{j}(1-p_{j})$. Use Chebyshev's inequality to obtain
\begin{align*}
\Erw S^{(12)}_{n}\ &\le\ \sum_{j\ge 1}\frac{np_{j}(1-p_{j})}{(np_{j})^{3/2}}\1_{\{np_{j}\in [1,n]\}}\ 
\le\ \sum_{j\ge 1}(np_{j})^{-1/2}\1_{\{np_{j}\in [1,n]\}}\\
&=\ \int_{[1,n/(\log n)^2]} n^{-1/2}x^{1/2}\ {\rm d}\rho(x)\ +\ \int_{(n/(\log n)^2,\, n]}n^{-1/2}x^{1/2}\ {\rm d}\rho(x)\\
&\le\ \frac{\rho(n)}{\log n}\ +\ \Big(\rho(n)-\rho\big(n(\log
n)^{-2}\big)\Big).
\end{align*}
Finally, the third term $S_{n}^{(13)}$ can be estimated as follows:
\begin{align*}
S_{n}^{(13)}\ &=\ \sup_{t\in[l_{n},\,1]}\sum_{j\ge 1}\1_{\{p_{j}\in (n^{t-1}-n^{3t/4-1},\,n^{t-1}]\}}\\&\le\ \sup_{t\in[l_{n},1]}\sum_{j\ge 1}\1_{\{p_{j}\in (n^{t-1}(1-n^{-l_{n}/4}),\,n^{t-1}]\}}\\
&\le\ \sup_{t\in[0,1]}\sum_{j\ge 1}\1_{\{p_{j}\in (n^{t-1}(1-n^{-l_{n}/4}),\,n^{t-1}]\}}\\
&=\ \sup_{t\in[0,1]}\sum_{j\ge 1}\1_{\{p_{j}\in (n^{t-1}(1-(\log n)^{-1/2}),\,n^{t-1}]\}}\\
&\le\ \sup_{t\in[0,1]}\Big(\rho\big(en^{1-t}\big)-\rho\big(n^{1-t}\big)\Big).
\end{align*}
Summarizing,
\begin{align*}
\Erw\sup_{t\in[0,1]}S_{n}^{(1)}(t)\ &\le\ 2\Big(\rho(n)-\rho\big(n(\log n)^{-2}\big)\Big)\ 
+\frac{\rho(n)}{\log n}\ +\ \sup_{t\in[0,1]}\Big(\rho\big(en^{1-t}\big)-\rho\big(n^{1-t}\big)\Big).
\end{align*}
Since $S^{(2)}_{n}(t)$ and $S^{(4)}_{n}(t)$ are monotone in $t$, we further infer more easily that
\begin{align*}
\Erw \sup_{t\in[0,1]}S^{(2)}_{n}(t)\ &=\ \sum_{j\ge 1}\Prob\{Z_{n,j}=0\}\1_{\{np_{j}\in [1,n]\}}\ =\ 
\sum_{j\ge 1}(1-p_{j})^n\1_{\{np_{j}\in [1,n]\}}\\
&\le\ \sum_{j\ge 1}e^{-p_{j}n}\1_{\{1/n\le p_{j} \le 1\}}\\
&=\ \int_{[1,n/\log n]}e^{-n/x}{\rm d}\rho(x)+\int_{(n/\log n,\,n]}e^{-n/x}{\rm d}\rho(x)\\
&\le\ \frac{\rho(n)}{n}\ +\ \Big(\rho(n)-\rho(n\big(\log n)^{-1}\big)\Big)\\
&\le\ \frac{\rho(n)}{\log n}\ +\ \Big(\rho(n)-\rho\big(n(\log n)^{-2}\big)\Big)
\end{align*}
and, with the help of Markov's inequality,
\begin{align*}
\Erw \sup_{t\in[0,1]}S^{(4)}_{n}(t)\ &=\ \sum_{j\ge 1}\Prob\{Z_{n,j}\ge 1\}\1_{\{np_{j} \le 1\}}\ \le\  \sum_{j\ge 1}np_{j}\1_{\{np_{j}\le 1\}}\\
&=\ \int_{[n,\,\infty)}n x^{-1}\ {\rm d}\rho(x)\ =\ \int_{[1,\,\infty)}x^{-1}\ {\rm d}(\rho(nx)-\rho(n)).
\end{align*}
Upon integration by parts and a use of the fact that $\lim_{x\to\infty}x^{-1}\rho(x)=0$ (see
Lemma 3 in \cite{Karlin:67}), we arrive at
\begin{align*}
\Erw \sup_{t\in[0,1]}S^{(4)}_{n}(t)\ &\le\ \int_{1}^\infty x^{-2}(\rho(nx)-\rho(n))\ {\rm d}x\ +\ \Big(\rho(n)-\rho(n-)\Big)\\
&\le\ \int_{1}^{\infty} x^{-2}(\rho(nx)-\rho(n))\ {\rm d}x\ +\ \Big(\rho(n)-\rho\big(n(\log n)^{-2}\big)\Big).
\end{align*}
Left with $S^{(3)}_{n}(t)$, we write 
\begin{align*}
\sup_{t\in[0,1]}&\sum_{j\ge 1}\1_{\{Z_{n,j}\in [1,n^{t}],\,np_{j} > n^{t}\}}\\
&\le\ \sup_{t\in[0,1]}\sum_{j\ge 1}\1_{\{Z_{n,j}\le n^{t},\,np_{j}-(np_{j})^{3/4}\ge n^{t}\}}\ +\ 
\sup_{t\in[0,1]}\sum_{j\ge 1}\1_{\{np_{j}-(np_{j})^{3/4} < n^{t}\le np_{j}\}}\\
&\le\ \sum_{j\ge 1}\1_{\{Z_{n,j}\le np_{j}-(np_{j})^{3/4},\,np_{j}\ge 1\}}\ +\ \sup_{t\in[0,1]}\sum_{j\ge 1}\1_{\{np_{j}-(np_{j})^{3/4}<n^{t}\le np_{j}\}}\\
&=\ \sum_{j\ge 1}\1_{\{n-Z_{n,j}-n(1-p_{j})\ge
(np_{j})^{3/4},\,np_{j}\in [1,n]\}}\ +\ \sup_{t\in[0,1]}\sum_{j\ge 1}\1_{\{np_{j}-(np_{j})^{3/4} < n^{t}\le np_{j}\}}.
\end{align*}
Since $n-Z_{n,j}$ has a binomial distribution with parameters $n$ and $1-p_{j}$, Chebyshev's inequality along with the estimates used for $\Erw S_{n}^{(12)}$ yields
$$ \Erw \sum_{j\ge 1}\1_{\{n-Z_{n,j}-n(1-p_{j})\ge (np_{j})^{3/4},\,np_{j}\in [1,n]\}}\ \le\ \frac{\rho(n)}{\log n}\ +\ \Big(\rho(n)-\rho\big(n(\log n)^{-2}\big)\Big).
$$
To find a proper bound for the second summand, we first verify that
$$ x-x^{3/4} < y \le x\quad\text{and}\quad y\ge 1 $$
entail
\begin{equation}\label{inverse_{i}nequality}
y\le x < y+(x_{0}-1)y^{3/4},
\end{equation}
where $x_{0}>1$ is the unique positive solution to the equation
$x-x^{3/4}=1$. Indeed, if $x<x_{0}$, then
$$x\,=\,(x-x^{3/4})+x^{3/4}<y+x_{0}^{3/4}\,=\,y+(x_{0}-1)\,\le\, y+(x_{0}-1)y^{3/4},$$ 
where the last inequality is a consequence of $y\ge 1$.
On the other hand, if $x\ge x_{0}$ we have
\begin{align*}
x\,&=\,(x-x^{3/4})+(1-x^{-1/4})^{-3/4}(x-x^{3/4})^{3/4}\\
&<\,y+(1-x_{0}^{-1/4})^{-3/4}(x-x^{3/4})^{3/4}\\
&<\,y+(1-x_{0}^{-1/4})^{-3/4}y^{3/4}\,=\,y+(x_{0}-1)y^{3/4},
\end{align*}
where the first inequality follows from the fact that $x\mapsto(1-x^{-1/4})^{-3/4}$ is nonincreasing on $(1,\infty)$, giving
$(1-x^{-1/4})^{-3/4}\le (1-x_{0}^{-1/4})^{-3/4}$ for $x\ge x_{0}$. In view of \eqref{inverse_{i}nequality},
\begin{align*}
\sup_{t\in [0,1]}&\sum_{j\ge 1}\1_{\{np_{j}-(np_{j})^{3/4} < n^{t}\le np_{j}\}}\ \le\ \sup_{t\in[0,1]}\sum_{j\ge 1}\1_{\{n^{t}\le np_{j} < n^{t}+(x_{0}-1)n^{3t/4}\}}\\
&\le\ \sup_{t\in[0,\,l_{n}]}\sum_{j\ge 1}\1_{\{n^{t}\le np_{j} < n^{t}+(x_{0}-1)n^{3t/4}\}}\ +\ \sup_{t\in[l_{n},\,1]}\sum_{j\ge 1}\1_{\{n^{t}\le np_{j} < n^{t}+(x_{0}-1)n^{3t/4}\}}\\
&\le\ \sum_{j\ge 1}\1_{\{1\le np_{j} < x_{0}n^{l_{n}}\}}+\sup_{t\in[l_{n},\,1]}\sum_{j\ge 1}\1_{\{p_{j}\in[n^{t-1},\,n^{t-1}(1+(x_{0}-1)n^{-t/4}))\}}\\
&\le\ \Big(\rho(n)-\rho\big(x_{0}^{-1}n(\log n)^{-2}\big)\Big)\ +\ \sup_{t\in[l_{n},\,1]}\sum_{j\ge 1}\1_{\{p_{j}\in[n^{t-1},\,n^{t-1}(1+(x_{0}-1)n^{-l_{n}/4}))\}}\\
&\le\ \Big(\rho(n)-\rho\big(x_{0}^{-1}n(\log n)^{-2}\big)\Big)\ +\ \sup_{t\in[0,1]}\sum_{j\ge 1}\1_{\{p_{j}\in[n^{t-1},\,n^{t-1}(1+(x_{0}-1)(\log n)^{-1/2}))\}}\\
&\le\ \Big(\rho(n)-\rho\big(x_{0}^{-1}n(\log n)^{-2}\big)\Big)\ +\ \sup_{t\in [0,1]}\Big(\rho\big(n^{1-t}\big)-\rho\big(e^{-1}n^{1-t}\big)\Big),
\end{align*}
where $l_{n}=2\log \log n/ \log n$ should be recalled. A combination of the previous estimates completes the proof of the proposition.\qed
\end{proof}

\section{Proofs for Section \ref{sec_prw}}\label{sec_proofs}

\begin{proof}[of Proposition \ref{strong}]
If we can prove that
\begin{align}\label{inter}
&\lim_{n\to\infty}\frac{N(ns)}{n}\ =\ \lim_{n\to\infty}\frac{N(ns-)}{n}\ =\ \frac{s}{\sfm}\quad\text{a.s.},
\shortintertext{for any $s>0$, then}
&\lim_{n\to\infty}\frac{\sfm\,\big(N(n)-N(n(1-t)-)\big)}{n}\ =\ t\quad\text{a.s.}\nonumber
\end{align}
for all
$t\in [0,1]$, and this yields \eqref{str} because, by Dini's theorem, convergence of monotone functions to a continuous limit is uniform on compact sets.

\vspace{.1cm}
\noindent {\sc Proof of \eqref{inter}}. Since $N(ns)-N(ns-)\leq 1$ for all $n\in\N$ and $s>0$, it suffices to consider $N(ns)$. Setting
\begin{equation}\label{nut}
\nu(t):=\inf\{k\in\N:S_{k}>t\}=\sum_{k\ge 0}\1_{\{S_{k}\le t\}},\quad t\in\R,
\end{equation}
we use the following estimate
\begin{equation}\label{eq:N(ns)/n estimate}
\frac{\nu(ns-y)}{n}\ -\ \frac{1}{n}\sum_{k=1}^{\nu(ns)}\1_{\{\eta_{k}>y\}}\ \le\ \frac{N(ns)}{n}\ \le\ \frac{\nu(ns)}{n}\quad\text{a.s.}
\end{equation} 
valid for any $y>0$ and $n$ sufficiently large. The strong law of large numbers provides us with $\lim_{n\to\infty}n^{-1}\sum_{k=1}^n\1_{\{\eta_{k}>y\}}=\Prob\{\eta>y\}$, while the same
law for renewal counting processes \cite[Theorem 5.1 on p.~57]{Gut:09} gives
$\lim_{n\to\infty}n^{-1}\nu(ns)=\sfm^{-1}s$ a.s. Consequently,
$\lim_{n\to\infty}n^{-1}\sum_{k=1}^{\nu(ns)}\1_{\{\eta_{k}>y\}}=\sfm^{-1}s\Prob\{\eta>y\}$ a.s. Finally, \eqref{inter} follows from \eqref{eq:N(ns)/n estimate} by first letting $n$ and then $y$ tend to infinity.\qed
\end{proof}

\begin{proof}[of Theorem \ref{prw_flt}]
(B4) is covered by Corollary 2.6 in \cite{IksMarMei:16}, see also Example 3.2 there.

\vspace{.1cm}\noindent 
\textsc{Proof of (B1)-(B3)}. By Theorem 1.1 in \cite{Iksanov:13b} applied to $h(t)=F(t)=\Prob\{\eta\le t\}$, we know that relations
\eqref{prw_finite_variance}, \eqref{prw_clt_sv_sec_moment} and
\eqref{prw_clt_rv_sec_moment} hold true with $\sum_{k\ge 0}F(nt-S_{k})\1_{\{S_{k}\le nt\}}$ in place of $N(nt)$. Put 
$$ X(t)\ :=\ \sum_{k\ge 0}\left(\1_{\{S_{k}+\eta_{k+1}\le t\}}-F(t-S_{k})\1_{\{S_{k}\le t\}}\right) $$ 
for $t\ge 0$. In view of the representation 
$$N(t)\ -\ \frac{1}{\sfm}\int_{0}^{t} F(u)\ {\rm d}u\ =\ X(t)\ +\ \left(\sum_{k\ge 0}F(t-S_{k})\1_{\{S_{k}\le t\}}-\frac{1}{\sfm}\int_{0}^{t} F(u){\rm d}u\right) $$ 
and Slutsky's lemma, it suffices to check that
\begin{equation}\label{prw_first_summand_{n}egligible}
\Plim_{n\to\infty}n^{-1/2}\sup_{0\le t\le T}|X(nt)|\ =\ 0,
\end{equation}
using also $\lim_{n\to\infty}n^{-1/2}c(n)=\infty$ in the situation of (B2) and (B3) (see Lemma \ref{cn} in the Appendix).

\vspace{.1cm}
Suppose we can prove that
\begin{equation}\label{prw_to_zero_as}
\lim_{t\to\infty}t^{-1/2}X(t)\ =\ 0\quad\text{a.s.}
\end{equation}
Then, by using
\begin{align*}
n^{-1/2}\sup_{0\le t\le T}|X(nt)|\ &\le\ n^{-1/2}\sup_{0\le t\le s}|X(t)|\ +\ n^{-1/2}\sup_{s\le t\le nT}|X(t)|\\
&\le\ n^{-1/2}\sup_{0\le t\le s}|X(t)|\ +\ T^{1/2}\sup_{t\ge s}|t^{-1/2}X(t)|
\end{align*}
for $0<s<nT$ and sending first $n$ and then $s$ to infinity, we see that \eqref{prw_to_zero_as} implies \eqref{prw_first_summand_{n}egligible}.

\vspace{.1cm}
Passing to the proof of \eqref{prw_to_zero_as}, we first observe that for each
$t\ge 0$, there exists $m\in\N_{0}$ such that $t\in [m,m+1)$ and
\begin{align*}
t^{-1/2}X(t)\ &\le\ m^{-1/2}\sum_{k\ge 0}\big(\1_{\{S_{k}+\eta_{k+1}\le m+1\}}-F(m+1-S_{k})\1_{\{S_{k}\le m+1\}}\big)\\
&+\ m^{-1/2}\sum_{k\ge 0}\big(F(m+1-S_{k})\1_{\{S_{k}\le m+1\}}-F(m-S_{k})\1_{\{S_{k}\le m\}}\big).
\end{align*}
Obviously, a lower estimate of similar kind holds as well so that \eqref{prw_to_zero_as} is a consequence of the two limit assertions
\begin{align}\label{prw_to_zero_as11}
&\hspace{3.5cm}\lim_{\N\ni m\to\infty} m^{-1/2}X(m)\ =\ 0\quad\text{a.s.}
\shortintertext{and}
\label{prw_to_zero_as12}
&\lim_{\N\ni m\to\infty} m^{-\delta}\sum_{k\ge 0}\big(F(m+1-S_{k})\1_{\{S_{k}\le
m+1\}}-F(m-S_{k})\1_{\{S_{k}\le m\}}\big)\ =\ 0\quad\text{a.s.}
\end{align}
for any $\delta>0$.\footnote{At this point it is enough to prove
\eqref{prw_to_zero_as12} for $\delta=1/2$ only. However, we shall
need \eqref{prw_to_zero_as12} for arbitrary $\delta>0$ later on.}

\vspace{.1cm}\noindent
\textsc{Proof of \eqref{prw_to_zero_as11}}. We start by noting that $X(t)$ equals the terminal value of the martingale $(R(k,t),\mathcal{F}_{k})_{k\in\N_{0}}$, where $R(0,t):=0$,
$\mathcal{F}_{0}:=\{\Omega,\oslash\}$ denotes the trivial
$\sigma$-algebra,
$$R(k,t):=\sum_{j=0}^{k-1}\big(\1_{\{S_{j}+\eta_{j+1}\le t\}}-F(t-S_{j})\1_{\{S_{j}\le t\}}\big)$$ and $\mathcal{F}_{k}:=\sigma((\xi_{j}, \eta_{j}): 1\le j\le k)$ for $k\in\N$. For any
$l\in\N$, use the Burkholder-Davis-Gundy inequality \cite[Theorem 11.3.2]{Chow+Teicher:97} to obtain
\begin{align*}
&\Erw (X(t))^{2l}\\
&\le\ C\left(\Erw\left[\sum_{k\ge 0}\Erw\big((R(k+1,t)-R(k,t))^2|\mathcal{F}_{k}\big)\right]^{l}+\sum_{k\ge 0}\Erw\big(R(k+1,t)-R(k,t)\big)^{2l}\right)\\
&=\ C\left(\Erw\left[\sum_{k\ge 0}F(t-S_{k})(1-F(t-S_{k}))\1_{\{S_{k}\le t\}}\right]^{l}\right.\\
&\hspace{1cm}+\ \left.\sum_{k\ge 0}\Erw \big(\1_{\{S_{k}+\eta_{k+1}\le
t\}}-F(t-S_{k})\1_{\{S_{k}\le t\}}\big)^{2l}\right)\ =:\ C(I_{1}(t)+I_2(t))
\end{align*}
for a positive constant $C$.

\vspace{.1cm}
Since $\sum_{k\ge 0}F(t-S_{k})(1-F(t-S_{k}))\1_{\{S_{k}\le t\}}\le\sum_{k\ge 0}(1-F(t-S_{k}))\1_{\{S_{k}\le t\}}$, we find
\begin{equation}\label{2}
I_{1}(t)=O\Bigg(\bigg(\int_{0}^{t}(1-F(y)){\rm d}y\bigg)^{l}\,\Bigg),\quad
t\to\infty
\end{equation}
when applying Lemma \ref{impo1} to the nonincreasing function $t\mapsto 1-F(t)$. Furthermore,
\begin{align*}
\Erw\bigg(&\big(\1_{\{S_{k}+\eta_{k+1}\le t\}}-F(t-S_{k})\1_{\{S_{k}\le t\}}\big)^{2l}\bigg|\mathcal{F}_{k}\bigg)\\
&=\ \big(F(t-S_{k})(1-F(t-S_{k}))^{2l}+(1-F(t-S_{k}))(F(t-S_{k}))^{2l}\big)\1_{\{S_{k}\le
t\}}\\
&\le\ (1-F(t-S_{k}))\1_{\{S_{k}\le t\}},
\end{align*}
giving $I_2(t)\le \Erw \sum_{k\ge 0}(1-F(t-S_{k}))\1_{\{S_{k}\le t\}}$ and thereupon
$$ I_2(t)\ =\ O\bigg(\int_{0}^{t}(1-F(y)){\rm d}y\bigg),\quad t\to\infty $$ 
by Lemma \ref{impo1} in the Appendix. We have thus shown that
\begin{equation}\label{prw_to_zero_as3}
\Erw X(t)^{2l}=O\left(\bigg(\int_{0}^{t}(1-F(y))\ {\rm d}y\bigg)^{l}\,\right),\quad t\to\infty,
\end{equation}
and so $\Erw X(t)^{2l}$ is of order $O(1)$ in the case $\Erw\eta=\int_{0}^\infty (1-F(y)){\rm d}y<\infty$. If $\Erw\eta=\infty$, then our assumption $\Erw\eta^{a}<\infty$ for some $a>0$ entails $a\in (0,1)$. Using \eqref{prw_to_zero_as3} in combination with $\lim_{t\to\infty}t^a(1-F(t))=0$, clearly a consequence of $\Erw \eta^a<\infty$, yields $\Erw X(m)^{2l}=O(m^{l(1-a)})$ as $m\to\infty$. Hence, for all $\eps>0$,
$$ \Prob\{|X(m)|>\eps m^{1/2}\}\ \le\ \frac{\Erw X(m)^{2l}}{\eps^{l}m^{l}}\ =\ O(m^{-la}),\quad m\to\infty, $$
by Markov's inequality. Choosing $l:=\min\{j\in\N: ja\ge 2\}$ in the last estimate yields \eqref{prw_to_zero_as11} by the Borel-Cantelli lemma.

\vspace{.1cm}
\noindent {\sc Proof of \eqref{prw_to_zero_as12}}. Set
$J(m):=\int_{[0,\,m]}\big(F(m+1-y)-F(m-y)\big)\,{\rm d}\nu(y)$ for
$m\in\N$, where $\nu(\cdot)$ is defined by \eqref{nut}. We will use the following estimate
\begin{align*}
0\ &\le\ \sum_{k\ge 0}\big(F(m+1-S_{k})\1_{\{S_{k}\le m+1\}}\ -\ F(m-S_{k})\1_{\{S_{k}\le
m\}}\big)\\
&=\ \int_{[0,\,m+1]}F(m+1-y)\ {\rm d}\nu(y)\ -\ \int_{[0,\,m]}F(m-y)\ {\rm d}\nu(y)\\
&\le\ J(m)+\nu(m+1)-\nu(m).
\end{align*}
Since $\lim_{m\to\infty}m^{-\delta}(\nu(m+1)-\nu(m))=0$ a.s.\ by Lemma \ref{impo2} in the Appendix, it remains to examine $J(m)$. But
\begin{align*}
J(m)\ &=\ F(m+1)-F(m)\ +\ \sum_{k=0}^{m-1}\int_{(k,\,k+1]}\big(F(m+1-y)-F(m-y)\big){\rm
d}\nu(y)\\
&\le\ 1\ +\ \sum_{k=0}^{m-1}\big(F(m+1-k)-F(m-1-k)\big)(\nu(k+1)-\nu(k))\\
&\le\ 1\ +\ (F(m)+F(m+1)-F(1))\max_{0\le k\le m-1}(\nu(k+1)-\nu(k)),
\end{align*}
implies $\lim_{m\to\infty}m^{-\delta}J(m)=0$ a.s.\ by
Lemma \ref{impo2}, and this completes the proof of Theorem \ref{prw_flt}.\qed
\end{proof}

\begin{proof}[of Proposition \ref{prw_uniformity}]
Here it is obviously enough to prove that
\begin{equation}\label{prw_uniformity_to_zero_as12}
\lim_{t\to\infty} t^{-c}(N(t+b)-N(t))\ =\ 0\quad\text{a.s.}
\end{equation}
for $b>0$, w.l.o.g.\ $b=1$ and $t\to\infty$ along integers only. Put
$$ Y(t)\ :=\ \sum_{j\ge 0}\1_{\{t<S_{j}+\eta_{j+1}\le t+1\}}-\sum_{j\ge 0}(F(t+1-S_{j})\1_{\{S_{j}\le t+1\}}-F(t-S_{j})\1_{\{S_{j}\le t\}}) $$
In view of \eqref{prw_to_zero_as12}, relation \eqref{prw_uniformity_to_zero_as12} follows if we can show that
$$
\lim_{n\to\infty} n^{-c}Y(n)=0\quad\text{a.s.}
$$
which in turn follows from
\begin{equation}\label{prw_uniformity_bounded_moments}
\Erw Y(t)^{2l}=O(1),\quad t\to\infty
\end{equation}
for every $l\in\N$ by a similar argument as in the previous proof using Markov's inequality and the Borel-Cantelli lemma.

\vspace{.1cm}
Left with \eqref{prw_uniformity_bounded_moments}, the subsequent argument is very similar to the corresponding one for $\Erw X(t)^{2l}$ in the previous proof. Again, $Y(t)$ is the terminal value of a martingale with respect to the filtration $(\mathcal{F}_{k})_{k\in\N_{0}}$ from there, viz.
$$ Y(k,t):=\sum_{j=0}^{k-1}\1_{\{t<S_{j}+\eta_{j+1}\le t+1\}}-\sum_{j=0}^{k-1}(F(t+1-S_{j})\1_{\{S_{j}\le t+1\}}-F(t-S_{j})\1_{\{S_{j}\le t\}}). $$
By another use of the Burkholder-Davis-Gundy inequality, one finds that
\begin{align*}
\Erw (Y(t))^{2l}\ &\le\ C\left(\Erw\Bigg(\sum_{k\ge 0}p(t-S_{k})(1-p(t-S_{k}))\1_{\{S_{k}\le t\}}\Bigg)^{l}+1\right)\\
&\le\ C\left(\Erw\Bigg(\sum_{k\ge 0}p(t-S_{k})\1_{\{S_{k}\le t\}}\Bigg)^{l}+1\right)
\end{align*}
for a positive constant $C$ and with $p(x):=F(x+1)-F(x)$. By Lemma \ref{impo1},
\begin{equation*}
\Erw\left(\sum_{k\ge 0}p(t-S_{k})\1_{\{S_{k}\le t\}}\right)^{l}\ =\ 
O\left(\sum_{j=0}^{[t]}\sup_{y\in[j,\,j+1)}(F(y+1)-F(y))\right)^{l}\ =\ O(1)
\end{equation*}
because
\begin{eqnarray*}
\Bigg(\sum_{j=0}^{[t]}\sup_{y\in[j,\,j+1)}(F(y+1)-F(y))\Bigg)^{l}\ \le\ 
\Bigg(\sum_{j=0}^{[t]}(F(j+2)-F(j))\Bigg)^{l}\ \le\ 2^{l}.
\end{eqnarray*}
This establishes \eqref{prw_uniformity_bounded_moments} thereby
finishing the proof of Lemma \ref{prw_uniformity}.\qed
\end{proof}

\begin{proof}[of Proposition \ref{sub}]
Note that
$$ U(x)\ :=\ \Erw\nu(x)\ =\ \sum_{j\ge 1}\Prob\{S_{j-1}\le x\},\quad x\in\R, $$
is the renewal function associated with $(S_{n})_{n\in\N}$. 
As a consequence of the distributional subadditivity of $\nu$ (see formula (5.7) on p.~58 in \cite{Gut:09}), the monotonicity of $\nu$, and the fact that $\nu(x)=0$ for $x<0$, $U$ is subadditive on $\R$, i.e. $U(x+y)\le U(x)+U(y)$ for all $x,y\in\R$, and so
\begin{equation*}\label{U_lin_bound}
U(x)\le Cx^{+}+C_2
\end{equation*}
for all $x\in\R$ and some positive $C$ and $D$. By using these facts, we finally obtain
\begin{align*}
\Erw\big(N(x+y)-N(x)\big)\ &=\ \sum_{j\ge 1}\Prob\{x<T_{j}\le x+y\}\\
&=\ \sum_{j\ge 1}\int_{[0,\,\infty)}\Prob\{x-z< S_{j-1}\le x+y-z\}\ \Prob\{\eta\in {\rm d}z\}\\
&=\ \int_{[0,\,\infty)}\big(U(x+y-z)-U(x-z)\big)\ \Prob\{\eta\in {\rm d}z\}\\
&\le\ U(y)\ \le\ Cy+D.
\end{align*}
for all $x,y\ge 0$.\qed
\end{proof}

\section{Proofs for Section \ref{main}}

Recall that the Bernoulli sieve is the Karlin occupancy scheme with the random probabilities $(p^{*}_{k})_{k\in\N}$ defined in \eqref{BS_frequencies}. Condition on $(p^{*}_{k})_{k\in\N}$ and apply Proposition \ref{approx_gen_bound} to obtain
\begin{align}
\Erw&\Bigg(\sup_{t\in [0,1]}|K^{*}_{n}(t)-(\rho^{*}(n)-\rho^{*}(n^{(1-t)-}))|\bigg|(p^{*}_{j})\Bigg)\nonumber\\
\begin{split}\label{final_BS}
&\le\ 6\Big(\rho^{*}(n)-\rho^{*}\big(x_{0}^{-1}n(\log n)^{-2}\big)\Big)\ +\ \frac{3\rho^{*}(n)}{\log n}\ +\ \int_{1}^\infty\frac{\rho^{*}(nx)-\rho^{*}(n)}{x^{2}}\ {\rm d}x\\
&\hspace{1cm}+\ 2\sup_{t\in[0,1]}\Big(\rho^{*}(en^{1-t})-\rho^{*}(e^{-1}n^{1-t})\Big)\ =:\ \eps_{n}
\end{split}
\end{align}
with $\rho^{*}(x)$ defined in \eqref{rho_def_BS}. The next lemma
shows that $\eps_{n}$ does not grow faster in probability than any power of $\log n$.

\begin{Lemma}\label{bs_approx_lemma}
For all $c>0$,
$$ \Plim_{n\to\infty}\frac{\eps_{n}}{(\log n)^{c}}\ =\ 0. $$
\end{Lemma}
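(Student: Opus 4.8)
The plan is to control each of the four terms making up $\eps_{n}$ in \eqref{final_BS} separately, after reinterpreting each $\rho^{*}$-increment as a difference of visit counts of the perturbed random walk $(T^{*}_{n})_{n\in\N}$ defined in \eqref{tstar_def}. Recall that $\rho^{*}(x)=\#\{k\ge 1: T^{*}_{k}\le \log x\}=N^{*}(\log x)$, where $N^{*}$ counts visits of the perturbed random walk with $\xi=|\log W|$ and $\eta=|\log(1-W)|$. Since $\mu=\Erw|\log W|<\infty$ throughout Section \ref{main}, Proposition \ref{strong} applies. The key observation is that all four terms are of the form $\rho^{*}(a_{n}e^{b_{n}})-\rho^{*}(a_{n})$, i.e.\ a count of visits to an interval whose length, measured on the logarithmic scale, is of order $\log\log n$ at most (for the first three terms) or bounded (for the last term); Proposition \ref{prw_uniformity} is tailored precisely to show that such counts, divided by any positive power of $\log n$, vanish in probability.

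First I would handle the first term $\rho^{*}(n)-\rho^{*}(x_{0}^{-1}n(\log n)^{-2})=N^{*}(\log n)-N^{*}(\log n-\log x_{0}-2\log\log n)$. This is the number of visits of $(T^{*}_{k})$ to an interval contained in $[\log n-b_{n},\,\log n]$ with $b_{n}=\log x_{0}+2\log\log n$. Writing $m_{n}:=[\log n]$ and $b_{n}\le 3\log\log n$ for large $n$, I can cover this interval by $O(\log\log n)$ unit-length subintervals of $[0,\,m_{n}]$, and Proposition \ref{prw_uniformity} (with $b=1$) gives that $\sup_{t\in[0,1]}(N^{*}(m_{n}t+1)-N^{*}(m_{n}t))=o((\log n)^{c})$ in probability for every $c>0$; summing $O(\log\log n)$ such bounds and absorbing the extra logarithmic factor into the power shows the first term is $o((\log n)^{c})$ in probability for every $c>0$. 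The same argument, verbatim, disposes of the fourth term $\sup_{t\in[0,1]}(\rho^{*}(en^{1-t})-\rho^{*}(e^{-1}n^{1-t}))=\sup_{t\in[0,1]}(N^{*}((1-t)\log n+1)-N^{*}((1-t)\log n-1))$, which is a supremum over a bounded-length window and hence directly of the form controlled by Proposition \ref{prw_uniformity}. The second term, $3\rho^{*}(n)/\log n=3N^{*}(\log n)/\log n$, tends in probability to $3/\mu$ by Proposition \ref{strong} (take $t=1$, or simply the renewal-type SLLN for $N^{*}$), so after dividing by $(\log n)^{c}$ it vanishes for every $c>0$; in fact it vanishes a.s.

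The remaining term $\int_{1}^{\infty}x^{-2}(\rho^{*}(nx)-\rho^{*}(n))\,{\rm d}x$ is the delicate one, and I expect this to be the main obstacle, because the integral ranges over arbitrarily large $x$ where the logarithmic-window heuristic breaks down. The plan here is to take expectations conditionally on $(p^{*}_{k})$ and use Proposition \ref{sub}: with the substitution $x=e^{y}$ we get $\int_{0}^{\infty}e^{-y}(N^{*}(\log n+y)-N^{*}(\log n))\,{\rm d}y$, and $\Erw(N^{*}(\log n+y)-N^{*}(\log n))\le Cy+D$ uniformly in $n$ by \eqref{sub_con}, so the (unconditional) expectation of this integral is bounded by $\int_{0}^{\infty}e^{-y}(Cy+D)\,{\rm d}y=C+D<\infty$, a constant independent of $n$. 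Hence this term is $O_{\Prob}(1)$ and, after division by $(\log n)^{c}$, converges to zero in probability for every $c>0$ by Markov's inequality. Finally, combining the four estimates via the union bound $\eps_{n}\le 6A_{n}+3B_{n}+C_{n}+2D_{n}$ and noting that a finite sum of terms each of which is $o((\log n)^{c})$ in probability (for every $c>0$) is again $o((\log n)^{c})$ in probability completes the proof. The only subtlety to watch is that Proposition \ref{prw_uniformity} is stated with the scaling parameter inside as $N(nt+b)-N(nt)$; applying it with "$n$" replaced by $m_{n}=[\log n]$ and then replacing the power $m_{n}^{-c}$ by $(\log n)^{-c}$ is harmless since $m_{n}\sim\log n$, but it must be spelled out.
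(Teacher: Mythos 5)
Your proposal is correct in substance and, for the integral term and the supremum term, coincides with the paper's argument: the paper likewise passes to $\rho^{*}(x)=N(\log x)$ via \eqref{connect}, bounds the expectation of $\int_{1}^{\infty}x^{-2}(\rho^{*}(nx)-\rho^{*}(n))\,{\rm d}x$ by a constant using \eqref{sub_con}, and disposes of $\sup_{t\in[0,1]}\big(\rho^{*}(en^{1-t})-\rho^{*}(e^{-1}n^{1-t})\big)$ by Proposition \ref{prw_uniformity}. Where you diverge is the first term: the paper simply notes that $\Erw\big(N(\log n)-N(\log n-\log x_{0}-2\log\log n)\big)=O(\log\log n)$ by Proposition \ref{sub} and applies Markov's inequality, treating the first three terms of \eqref{final_BS} in one stroke. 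Your covering of the window of length $O(\log\log n)$ by unit intervals, followed by summation of $O(\log\log n)$ copies of the supremum controlled by Proposition \ref{prw_uniformity}, also works after absorbing the extra $\log\log n$ into the power of $\log n$, but it is heavier machinery than needed: an expectation bound suffices for a single window at a fixed location, and Proposition \ref{prw_uniformity} is only genuinely required for the uniform-in-$t$ fourth term.

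One claim you make is wrong and should be repaired: it is not true that $\mu=\Erw|\log W|<\infty$ throughout Section \ref{main}. Lemma \ref{bs_approx_lemma} is also invoked in the proof of Theorem \ref{main2_theorem_fclt}, where \eqref{normalization_rv_sec_moment} holds with $\alpha\in(0,1)$ and hence $\Erw|\log W|=\infty$; consequently your appeal to Proposition \ref{strong} (which assumes $\sfm=\Erw\xi<\infty$) to control $3\rho^{*}(n)/\log n$ does not cover all cases in which the lemma is used. The repair is immediate and uses only tools you already employ: by Proposition \ref{sub}, $\Erw\rho^{*}(n)=\Erw N(\log n)\le C\log n+D$ with no moment assumption on $\xi$, so $\Erw\big(3\rho^{*}(n)/\log n\big)=O(1)$ and Markov's inequality finishes this term. (A cosmetic point: ``taking expectations conditionally on $(p^{*}_{k})$'' does nothing for the integral term, since $\rho^{*}$ is $(p^{*}_{k})$-measurable; the expectation you actually need, and in fact compute, is the unconditional one.)
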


\begin{proof}
Observe that
\begin{equation}\label{connect}
\rho^{*}(x)=N(\log x)
\end{equation}
for $N(\cdot)$ corresponding to $\xi=|\log W|$ and $\eta=|\log(1-W)|$. Using \eqref{sub_con}, we infer that the expectation of the first three terms of \eqref{final_BS} is $O(\log\log n)$. Hence the sum of these terms divided by $(\log n)^c$ converges to zero in
probability by Markov's inequality. Finally, the fourth
term
\begin{equation*}
\sup_{t\in [0,1]}\big(\rho^{*}(en^{1-t})-\rho^{*}(e^{-1}n^{1-t})\big)\ =\ \sup_{t\in [0,1]}\big(N(t\log n+1)-N(t\log n-1)\big)
\end{equation*}
is of order $o((\log n)^c)$ in probability for any $c>0$ by Lemma \ref{prw_uniformity}.\qed
\end{proof}

\begin{proof}[of Proposition \ref{probab}]
By Lemma \ref{bs_approx_lemma},
$$ \Plim_{n\to\infty}\frac{1}{\log n}\Erw\Bigg(\sup_{t\in
[0,1]}|K^{*}_{n}(t)-(\rho^{*}(n)-\rho^{*}(n^{(1-t)-}))|\Big|(p^{*}_{j})\Bigg)\ =\ 0 $$ 
which implies
\begin{equation}\label{pr1}
\Plim_{n\to\infty}\frac{1}{\log n}\sup_{t\in [0,1]}\big|K^{*}_{n}(t)-(\rho^{*}(n)-\rho^{*}(n^{(1-t)-}))\big|\ =\ 0
\end{equation}
by Markov's inequality and the dominated convergence theorem. Furthermore,
$$ \bigg|\frac{K_{n}^{*}}{\mu^{-1}\log n}-1\bigg|\ \le\ \frac{|K_{n}^{*}-\rho^{*}(n)|}{\mu^{-1}\log n}\ +\ \bigg|\frac{\rho^{*}(n)}{\mu^{-1}\log n}-1\bigg|\quad\text{a.s.} $$ 
The first term on the right-hand side converges to zero in probability by \eqref{pr1} (recall that $K_{n}^{*}=K_{n}^{*}(1)$), and the second does so by \eqref{connect} and Proposition \ref{strong}. Hence,\footnote{Though not needed here, let us note that the convergence in \eqref{kn} holds a.s. Just apply the Borel-Cantelli lemma to $K^{*}_{[\exp(n^2)]}$ and then use the monotonicity of $K^{*}_{n}$ in $n$. Alternatively, this follows from Theorems 1' and 8 in \cite{Karlin:67} in combination with $\lim_{n\to\infty}(\log n)^{-1}\rho^{*}(n)=\mu^{-1}$ a.s.\ provided by Proposition
\ref{strong}.}
\begin{equation}\label{kn}
\Plim_{n\to\infty}\frac{K_{n}^{*}}{\log n}\ =\ \frac{1}{\mu}.
\end{equation}
To complete the proof, we observe that
\begin{align*}
\sup_{t\in [0,1]}\bigg|{K_{n}^{*}(t)\over K_{n}^{*}}-t\bigg|\ &=\ {\mu^{-1}\log n\over
K_{n}^{*}}\left({\sup_{t\in [0,1]}|K_{n}^{*}(t)-tK_{n}^{*}|\over \mu^{-1}\log n}\right)\\
&\le\ {\mu^{-1}\log n\over K_{n}^{*}}\Bigg({\sup_{t\in [0,1]}|K_{n}^{*}(t)-\big(\rho^{*}(n)-\rho^{*}(n^{(1-t)-})\big)|\over\mu^{-1}\log n}\\
&\hspace{.8cm}+\ \sup_{t\in [0,1]}\bigg|{\mu(\rho^{*}(n)-\rho^{*}(n^{(1-t)-}))\over \log
n}-t\bigg|+\bigg|{K_{n}^{*}\over \mu^{-1}\log n}-1\bigg|\Bigg)
\end{align*}
and note that the right-hand side converges to zero in probability by \eqref{pr1}, \eqref{kn} and Proposition \ref{strong}.\qed
\end{proof}

\begin{proof}[of Theorem \ref{main_theorem_fclt}]

\vspace{.1cm}\noindent
{\sc Proof for $K^{*}_{n}(t)$}. Denote by $a_{n}$ the normalization used for $K^{*}_{n}(t)$ in the respective parts of Theorem \ref{main_theorem_fclt}. Observe that $a_{n}$ grows faster than some power of the logarithm. Hence, by \eqref{final_BS}, Lemma \ref{bs_approx_lemma} and Markov's inequality,
$$ \Plim_{n\to\infty}\Prob\left\{\sup_{t\in [0,1]}|K^{*}_{n}(t)-(\rho^{*}(n)-\rho^{*}(n^{(1-t)-}))|>\eps a_{n}\bigg|(p^{*}_{j})\right\}\ =\ 0 $$ 
for all $\epsilon>0$. Using the dominated convergence theorem, this yields
$$ \Plim_{n\to\infty}a_{n}^{-1}\sup_{t\in [0,1]}|K^{*}_{n}(t)-(\rho^{*}(n)-\rho^{*}(n^{(1-t)-}))|\ =\ 0. $$
In view of representation \eqref{repr}, it remains to prove Theorem \ref{main_theorem_fclt} with $\rho^{*}(n)-\rho^{*}(n^{(1-t)-})$ replacing $K^{*}_{n}(t)$. Using \eqref{connect}, we see that this is accomplished by an application of Theorem \ref{prw_flt} to the process
$(\rho^{*}(n^{t}))_{t\in [0,1]}$ 
and the subsequent use of the continuous mapping theorem. For the
latter, three supporting facts are:
\begin{description}[(3)]\itemsep3pt
\item[(1)] in the $J_{1}$- ($M_{1}$-)
topology on $D[0,1]\times D[0,1]$
$$ \bigg({\rho^{*}(n)-u_{n}(1)\over a_{n}},\,
{-\rho^{*}(n^{(1-t)-})+u_{n}(1)-u_{n}(t)\over a_{n}}\bigg)\ \Longrightarrow\ \big(X(1), -X((1-t)-)\big), $$ 
as $n\to\infty$, where $X$ is a Brownian motion and the convergence is in the $J_{1}$-topology in (C1) and (C2), whereas $X$ is an $\alpha$-stable L\'{e}vy process and the convergence is in the $M_{1}$-topology in (C3); 
\item[(2)] if $D[0,1]\times D[0,1]$ is equipped with the $J_{1}$ or $M_{1}$-topology (which is stronger than the product topology), then the mapping $\psi: D[0,1]\times D[0,1]\to D[0,1]$, defined by
$\psi(x,y):=x+y$ is continuous;  
\item[(3)] $(Y(1)-Y((1-t)-))_{t\in [0,1]}\eqdist (Y(t))_{t\in [0,1]}$ for any L\'{e}vy process $Y$.
\end{description}

\vspace{.2cm}\noindent {\sc Proof for $K^{*}_{n}(t)/K^{*}_{n}$}. Write
\begin{align}
{\log n\over \mu a_{n}}\left(\frac{K^{*}_{n}(t)}{K^{*}_{n}}-t+\frac{v_{n}(t)-tv_{n}(1)}{u_{n}(1)}\right)\ 
&=\ \frac{\log n}{\mu K^{*}_{n}}\left(\frac{K^{*}_{n}(t)-tK^{*}_{n}+v_{n}(t)-tv_{n}(1)}{a_{n}}\right)\notag\\
&\hspace{.8cm}+\ \frac{\log n}{\mu K^{*}_{n}}\frac{v_{n}(t)-tv_{n}(1)}{u_{n}(1)}{K_{n}^{*}-u_{n}(1)\over a_{n}}\label{imp}
\end{align}
with $a_{n}$ as before. By what has already been proved, we know that
\begin{equation}\label{cor_proof1}
\left(\frac{K^{*}_{n}(t)-u_{n}(t)}{a_{n}}\right)_{t\in[0,1]}\ \Longrightarrow\ (Z(t))_{t\in[0,1]},\quad n\to\infty
\end{equation}
with some L\'{e}vy process $Z$ depending on the respective case. Use the continuous
mapping theorem along with continuity of the summation mapping and \eqref{kn} to obtain
$$ {\log n\over \mu K^{*}_{n}}\left(\frac{K^{*}_{n}(t)-tK^{*}_{n}+v_{n}(t)-tv_{n}(1)}{a_{n}}\right)\ \Longrightarrow\ (Z(t)-tZ(1))_{t\in[0,1]},\quad n\to\infty. $$ 
In view of \eqref{kn}, \eqref{cor_proof1} and
$$ \sup_{t\in[0,1]}\left|\frac{v_{n}(t)-tv_{n}(1)}{u_{n}(1)}\right|\ \le\ \frac{2v_{n}(1)}{u_{n}(1)}\ \to\ 0,\quad n\to\infty, $$
the second term in \eqref{imp} converges to zero in probability uniformly in
$t\in [0,1]$. The proof is completed by an appeal to Slutsky's lemma.\qed
\end{proof}

\begin{proof}[of Theorem \ref{main2_theorem_fclt}]
As for $K_{n}^{*}(t)$, the argument used in the proof of Theorem \ref{main_theorem_fclt} applies here without changes, and \eqref{clt_{i}nf_exp} follows. Passing to the proof for
$K_{n}^{*}(t)/K^{*}_{n}$, we immediately conclude that \eqref{clt_{i}nf_exp} entails
$$ \bigg(\frac{\ell(\log n)K^{*}_{n}(t)}{(\log n)^{\alpha}},\,{(\log n)^\alpha\over\ell(\log n)K_{n}^{*}}\bigg)\ \Longrightarrow\ \bigg(W^{\leftarrow}_{\alpha}(1)-W^{\leftarrow}_{\alpha}((1-t)-),\,{1\over W^{\leftarrow}_\alpha(1)}\bigg),\quad n\to\infty $$
in the
$J_{1}$-topology on $D[0,1]\times D[0,1]$. To complete the proof, use once again the continuous mapping theorem together with the following fact: if $D[0,1]\times D[0,1]$ is equipped with the $J_{1}$-topology, then the mapping $\phi: D[0,1]\times D[0,1]\to
D[0,1]$ defined by $\phi(x,y):=xy$ is continuous.\qed
\end{proof}

\section{Appendix}

We collect three auxiliary results that have been used in the proofs of the main results.

\begin{Lemma}\label{cn}
Let $c(x)$ be a function as defined in (B2) or (B3) of Theorem \ref{prw_flt}. Then
$$ \lim_{x\to\infty}x^{-1/2}c(x)\ =\ \infty. $$
\end{Lemma}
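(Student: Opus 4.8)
The plan is to extract the growth rate of $c$ directly from its defining relation, treating cases (B2) and (B3) in turn. The common first step is to check that $c(x)\to\infty$ as $x\to\infty$. Indeed, the defining relation gives $x\sim c(x)^{2}/\ell(c(x))$ in case (B2) and $x\sim c(x)^{\alpha}/\ell(c(x))$ in case (B3); since a slowly varying function is bounded away from $0$ on compact subsets of its domain, if $c(x_{n})$ stayed bounded along some sequence $x_{n}\to\infty$ then $c(x_{n})^{2}/\ell(c(x_{n}))$ (resp.\ $c(x_{n})^{\alpha}/\ell(c(x_{n}))$) would be bounded, forcing $x_{n}$ to be bounded, a contradiction. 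Hence $c(x)\to\infty$, and $\ell(c(x))$ is well defined and itself slowly varying in $x$ for all large $x$.

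In case (B2) I would argue as follows. Since $\sfs^{2}={\rm Var}\,\xi=\infty$ while $\sfm=\Erw\xi<\infty$, necessarily $\Erw\xi^{2}=\infty$, so that $\ell(x)\sim\Erw\xi^{2}\1_{\{\xi\le x\}}$ is nondecreasing and tends to $+\infty$; combined with $c(x)\to\infty$ this gives $\ell(c(x))\to\infty$. The defining relation rewrites as $c(x)^{2}\sim x\,\ell(c(x))$, i.e.\ $(x^{-1/2}c(x))^{2}\sim\ell(c(x))\to\infty$, which is precisely the assertion.

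In case (B3) one more ingredient is needed, since $\ell$ itself need not tend to infinity. Fix $\eps\in(0,2-\alpha)$, a nonempty interval because $\alpha<2$. By the standard property of slowly varying functions that $y^{\eps}\ell(y)\to\infty$ as $y\to\infty$, together with $c(x)\to\infty$, we get $\ell(c(x))\ge c(x)^{-\eps}$ for all large $x$. Substituting this into $c(x)^{\alpha}\sim x\,\ell(c(x))$ yields, for large $x$, $c(x)^{\alpha}\ge\tfrac12 x\,c(x)^{-\eps}$, hence $c(x)\ge(x/2)^{1/(\alpha+\eps)}$ and therefore $x^{-1/2}c(x)\ge 2^{-1/(\alpha+\eps)}\,x^{1/(\alpha+\eps)-1/2}$. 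Since $\eps<2-\alpha$ we have $1/(\alpha+\eps)>1/2$, so the exponent is strictly positive and the right-hand side tends to $+\infty$, as claimed.

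I do not expect a genuine obstacle here; the only point that must be watched is the choice of $\eps$ in (B3), where one needs both $\ell(c(x))\ge c(x)^{-\eps}$ (valid for every $\eps>0$) and the resulting exponent $1/(\alpha+\eps)-1/2$ to remain positive — and the latter holds precisely because $\alpha<2$, which is exactly the range assumed in (B3). An alternative, slightly heavier, route is to observe via the asymptotic-inversion theorem for regularly varying functions that $c$ is regularly varying of index $1/\alpha>1/2$ in (B3) and of index $1/2$ in (B2); but in (B2) one still needs the extra input $\ell(x)\uparrow\infty$ to conclude, so the direct estimates above seem to be the shortest path.
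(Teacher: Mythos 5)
Your proof is correct and in case (B2) it is essentially identical to the paper's argument: $c(x)\to\infty$, $\ell\to\infty$ because $\sfs^{2}=\infty$, hence $(x^{-1/2}c(x))^{2}\sim\ell(c(x))\to\infty$. In case (B3) the paper simply observes that $c$ is regularly varying of index $1/\alpha>1/2$ and stops there, whereas you derive the same conclusion by the explicit lower bound $\ell(y)\ge y^{-\eps}$ with $\eps<2-\alpha$; this is a slightly more elementary rendering of the same fact, and both are valid.
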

\begin{proof}
For (B3), this is immediate because
$\lim_{x\to\infty}c^{-\alpha}(x)x\ell(c(x))=1$ implies
that $c$ varies regularly with index $1/\alpha>1/2$. Suppose now the assumptions of part (B2) be valid. Since $c(x)$ is the asymptotic inverse of $x\mapsto x^2/\ell(x)$ and
$\lim_{x\to\infty} x^2/\ell(x)=\infty$, we infer that $\lim_{x\to\infty}c(x)=\infty$. Moreover,
$\lim_{x\to\infty}\ell(x)=\infty$ in view of $\sfs^2=\infty$. Thus,
$\lim_{x\to\infty}\ell(c(x))=\infty$ which in
combination with $x^{-1}c^2(x)\sim \ell(c(x))$, $x\to\infty$,
entails $\lim_{x\to\infty}x^{-1/2}c(x)=\infty$.\qed
\end{proof}

Let $(S_{n})_{n\in\N_{0}}$ be a zero-delayed standard random walk with
positive steps as in Section \ref{sec_prw}. Recall the notation
$\nu(t)=\inf\{k\in\N: S_{k}>t\}$ for $t\in\R$. Plainly, $\nu(t)\equiv 0$ for $t\le 0$.

\begin{Lemma}\label{impo2}
For all $\delta>0$,
\begin{equation}\label{impo2-6} 
\lim_{\N\ni n\to\infty}\, n^{-\delta} \max_{0\le j\le n}(\nu(j+1)-\nu(j))\ =\ 0\quad\text{a.s.}
\end{equation}
\end{Lemma}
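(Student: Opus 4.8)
The plan is to bound, uniformly in $j\ge 0$, the moments of the block increments $\nu(j+1)-\nu(j)$, and then to conclude by a union bound and the Borel--Cantelli lemma. The point to keep in mind throughout is that the steps $\xi$ are \emph{strictly} positive, so that the number of renewals of $(S_{n})_{n\ge 0}$ in a unit interval has light tails.

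First I would establish a stochastic domination. Set $\beta:=\nu(j)=\inf\{k\ge 1:S_{k}>j\}$, a stopping time with respect to $\cF_{k}:=\sigma(\xi_{1},\ldots,\xi_{k})$, and note $S_{\beta}>j$. Since $(S_{n})$ is increasing,
$$ \nu(j+1)-\nu(j)\ =\ \#\{k\ge\beta:S_{k}\le j+1\}\ =\ \#\{i\ge 0:S_{\beta+i}-S_{\beta}\le j+1-S_{\beta}\}. $$
By the strong Markov property $(S_{\beta+i}-S_{\beta})_{i\ge 0}$ is a copy $(S'_{i})_{i\ge 0}$ of $(S_{i})_{i\ge 0}$ independent of $\cF_{\beta}$, and since $j+1-S_{\beta}<1$ the right-hand side is bounded by $\nu'(1):=\#\{i\ge 0:S'_{i}\le 1\}$. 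Hence $\nu(j+1)-\nu(j)\le\nu'(1)$ pointwise, with $\nu'(1)\eqdist\nu(1)$, so
$$ \Erw\big[(\nu(j+1)-\nu(j))^{p}\big]\ \le\ \Erw\big[\nu(1)^{p}\big]\qquad\text{for all }j\ge 0,\ p\ge 1. $$
Next I would check that $\nu(1)$ has all moments (in fact an exponential moment). Indeed $\Prob\{\nu(1)>m\}=\Prob\{S_{m}\le 1\}$, and because $\xi>0$ a.s.\ there is $c>0$ with $p_{0}:=\Prob\{\xi>c\}>0$; then $S_{m}\ge c\,\#\{1\le i\le m:\xi_{i}>c\}$, so $\{S_{m}\le 1\}\subseteq\{\mathrm{Bin}(m,p_{0})\le 1/c\}$, and a Chernoff bound gives $\Prob\{S_{m}\le 1\}\le e^{-\gamma m}$ for some $\gamma>0$ and all large $m$. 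Thus $c_{p}:=\Erw[\nu(1)^{p}]<\infty$ for every $p\in\N$.

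Finally, fix $\delta>0$ and an integer $p>2/\delta$. For any $\eps>0$, a union bound and Markov's inequality yield
$$ \Prob\Big\{\max_{0\le j\le n}(\nu(j+1)-\nu(j))>\eps n^{\delta}\Big\}\ \le\ (n+1)\,\frac{c_{p}}{\eps^{p}\,n^{\delta p}}\ =\ O\big(n^{1-\delta p}\big), $$
which is summable in $n$ since $\delta p>2$. By Borel--Cantelli, a.s.\ $n^{-\delta}\max_{0\le j\le n}(\nu(j+1)-\nu(j))\le\eps$ for all sufficiently large $n\in\N$; intersecting these a.s.\ events over $\eps=1/k$, $k\in\N$, and using that the quantity is nonnegative, gives $\lim_{n\to\infty}n^{-\delta}\max_{0\le j\le n}(\nu(j+1)-\nu(j))=0$ a.s., which is \eqref{impo2-6}.

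The only genuinely delicate step is the first one: one must use that $\xi$ is a.s.\ \emph{positive} (not merely nonnegative) so that $\nu(1)$ — equivalently the number of renewals in a unit interval — has light tails, and then transfer this bound to the block increments uniformly in $j$ via the strong Markov property; the remaining two steps are routine. A cheaper route for the first moment alone would invoke subadditivity of the renewal function $U(x)=\Erw\nu(x)$ (as used in the proof of Proposition \ref{sub}), giving $\Erw(\nu(j+1)-\nu(j))=U(j+1)-U(j)\le U(1)$, but higher moments are needed for the Borel--Cantelli argument, so the strong-Markov reduction above seems unavoidable.
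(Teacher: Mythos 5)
Your proof is correct and follows essentially the same route as the paper's: distributional subadditivity of $\nu$ (which you rederive via the strong Markov property), light tails of $\nu(1)$ coming from the exponential decay of $\Prob\{S_m\le 1\}$, and Markov's inequality plus Borel--Cantelli. The only cosmetic difference is that the paper works with the exponential moment $\Erw e^{a\nu(1)}<\infty$ and treats the increments $\nu(n+1)-\nu(n)$ individually before passing to the maximum, whereas you use polynomial moments of $\nu(1)$ together with a union bound over $j\le n$; both variants are fine.
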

\begin{proof}
Fix $a>0$. Since $\lim_{\gamma\to\infty}\Erw e^{-\gamma S_{1}}=\Prob\{S_{1}=0\}=0$, we can pick $\gamma>0$ such that $e^a\Erw e^{-\gamma S_{1}}<1$. Then
\begin{align}\label{7}
e^a{\Erw e^{a\nu(1)}-1\over e^a-1}\ &=\ \sum_{j\ge 1}e^{aj}\Prob\{\nu(1)\ge j\}\ =\ \sum_{j\ge
1}e^{aj}\Prob\{S_{j-1}\le 1\}\notag\\
&\le\ e^\gamma \sum_{j\ge 1}e^{aj}\big(\Erw e^{-\gamma S_{1}}\big)^{j-1}\ <\ \infty
\end{align}
by Markov's inequality. Hence, for all $\eps>0$,
$$ \Prob\{\nu(k+1)-\nu(k)>\eps k^\delta\}\ \le\ \Prob\{\nu(1)>\eps k^\delta\}\ \le\ e^{-a\eps
k^\delta}\Erw e^{a\nu(1)} $$
having utilized the distributional subadditivity of $\nu(t)$ \cite[formula (5.7) on p.~58]{Gut:09}) for the first inequality and Markov's inequality
for the second. By the Borel-Cantelli lemma, we conclude that $\lim_{n\to\infty}n^{-\delta}(\nu(n+1)-\nu(n))=0$ a.s.\ and thereupon \eqref{impo2-6}.\qed
\end{proof}

\begin{Lemma}\label{impo1}
Let $G:[0,\infty)\to [0,\infty)$ be a locally bounded function, $l\in\N$ be an arbitrary integer, and $[t]$ denote the integer-part of $t$. Then, as $t\to\infty$,
\begin{equation}\label{impo1-gen}
\Erw\left(\sum_{k\ge 0}G(t-S_{k})\1_{\{S_{k}\le t\}}\right)^{l}\ =\ O\left(\Bigg(\sum_{j=0}^{[t]}\sup_{y\in[j,\,j+1)}G(y)\Bigg)^{l}\,\right).
\end{equation}
If $G$ is nonincreasing, then
\begin{equation}\label{impo1-mon}
\Erw\left(\sum_{k\ge 0}G(t-S_{k})\1_{\{S_{k}\le t\}}\right)^{l}\ =\ O\left(\bigg(\int_{0}^{t} G(y){\rm d}y\bigg)^{l}\,\right).
\end{equation}
as $t\to\infty$.
\end{Lemma}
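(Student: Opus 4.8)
\textbf{Plan for the proof of Lemma \ref{impo1}.} The natural strategy is to reduce both \eqref{impo1-gen} and \eqref{impo1-mon} to a single ``first-moment'' estimate lifted to the $l$-th power by a subadditivity (or Hölder-type) argument. First I would treat the case $l=1$: by monotone convergence and the fact that $U(x)=\Erw\nu(x)=\sum_{j\ge 0}\Prob\{S_{j}\le x\}$ is the renewal function of $(S_n)$, one has
$$\Erw\left(\sum_{k\ge 0}G(t-S_{k})\1_{\{S_{k}\le t\}}\right)=\int_{[0,\,t]}G(t-y)\,{\rm d}U(y).$$
Splitting $[0,t]$ into the unit intervals $[j,j+1)$, $0\le j\le [t]$, and using that $U$ is subadditive (hence $U(x+1)-U(x)\le U(1)$ for all $x\ge 0$, by the same argument as in the proof of Proposition \ref{sub}), the integral is bounded by
$$U(1)\sum_{j=0}^{[t]}\sup_{y\in[j,\,j+1)}G(y),$$
which is \eqref{impo1-gen} for $l=1$. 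When $G$ is nonincreasing, on each interval $[j,j+1)$ the supremum is $G(j)\le \int_{j-1}^{j}G(y)\,{\rm d}y$ for $j\ge 1$ (and the $j=0$ term is the bounded constant $G(0)U(1)$), so $\sum_{j=0}^{[t]}\sup_{y\in[j,j+1)}G(y)\le G(0)+\int_{0}^{t}G(y)\,{\rm d}y=O\big(\int_{0}^{t}G(y)\,{\rm d}y\big)$ provided $\int_{0}^{\infty}G=\infty$; if $\int_0^\infty G<\infty$ the right-hand side of \eqref{impo1-mon} is $O(1)$ and the left-hand side is bounded by the $l=1$ estimate raised to a power anyway. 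Thus \eqref{impo1-mon} follows from \eqref{impo1-gen} once the latter is established for general $l$.

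\textbf{Passage to general $l$.} The key step is to bound the $l$-th moment. Here I would expand
$$\left(\sum_{k\ge 0}G(t-S_{k})\1_{\{S_{k}\le t\}}\right)^{l}=\sum_{k_{1}\ge 0}\cdots\sum_{k_{l}\ge 0}\prod_{i=1}^{l}G(t-S_{k_{i}})\1_{\{S_{k_{i}}\le t\}},$$
take expectations, and estimate each mixed term by conditioning successively on the order statistics of the indices. Relabel so that $k_{1}\le k_{2}\le\cdots\le k_{l}$ (this costs only a combinatorial factor $l!$). Writing $S_{k_{i}}=S_{k_{1}}+(S_{k_{2}}-S_{k_{1}})+\cdots$, and using the independence and identical distribution of the increments, the expectation factorizes into an iterated convolution against $G$. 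Concretely, one shows by induction on $l$ that
$$\Erw\prod_{i=1}^{l}G(t-S_{k_{i}})\1_{\{S_{k_{i}}\le t\}}\ \text{ summed over } k_{1}\le\cdots\le k_{l}\ \le\ \left(\sup_{x\ge 0}\int_{[0,\,x]}G(x-y)\,{\rm d}U(y)\right)^{\!l-1}\!\!\int_{[0,\,t]}G(t-y)\,{\rm d}U(y),$$
because integrating out the largest index $k_{l}$ given the value $S_{k_{l-1}}=s$ produces a factor $\int_{[0,\,t-s]}G(t-s-z)\,{\rm d}U(z)\le \sup_{x}\int_{[0,x]}G(x-y)\,{\rm d}U(y)$, and this supremum is exactly the $l=1$ quantity $U(1)\sum_{j=0}^{[t]}\sup_{y\in[j,j+1)}G(y)$ estimated above (monotone in $t$, so the sup over $x\le t$ is attained at $x=t$). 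Iterating $l-1$ times gives the claimed bound, which is precisely $O\big(\big(\sum_{j=0}^{[t]}\sup_{y\in[j,j+1)}G(y)\big)^{l}\big)$.

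\textbf{Main obstacle.} The only genuinely delicate point is controlling the mixed moments $\Erw\prod_{i}G(t-S_{k_{i}})\1_{\{S_{k_{i}}\le t\}}$ uniformly over all index tuples; a naive Hölder/Minkowski bound in the $\ell^{l}$ norm is not available since $\sum_k G(t-S_k)\1_{\{S_k\le t\}}$ need not have finite $l$-th moment term-by-term in any obvious product form. The inductive conditioning argument above circumvents this by peeling off one index at a time and exploiting that, after conditioning on the previous partial sum, the tail sum is again a renewal-type functional of the \emph{same} shape but started afresh, so the single supremum $\sup_{x}\int_{[0,x]}G(x-y)\,{\rm d}U(y)$ does all the work. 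Everything else—the reduction of \eqref{impo1-mon} to \eqref{impo1-gen}, the $l=1$ renewal estimate, the subadditivity of $U$—is routine and parallels the proof of Proposition \ref{sub}.
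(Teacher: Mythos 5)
Your proposal is correct, but it lifts the estimate from $l=1$ to general $l$ by a genuinely different mechanism than the paper. The paper never expands the $l$-th power: it dominates the sum pointwise by $\sum_{n=0}^{[t]}\bigl(\sup_{y\in[n,n+1)}G(y)\bigr)\,(\nu(t-n)-\nu(t-(n+1)))$, applies Jensen's inequality for the convex map $x\mapsto x^{l}$ with the normalized weights $\sup_{[n,n+1)}G$, and then reduces everything to the single moment bound $\Erw(\nu(t-n)-\nu(t-(n+1)))^{l}\le\Erw\nu(1)^{l}<\infty$, obtained from distributional subadditivity of $\nu$ and the exponential moment estimate in Lemma~\ref{impo2}. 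You instead expand $(\,\cdot\,)^{l}$ into a multiple sum, order the indices at the cost of $l!$, and peel them off one at a time by conditioning on $\mathcal{F}_{k_{l-1}}$, so that each step contributes a factor $\sup_{0\le x\le t}\int_{[0,x]}G(x-y)\,{\rm d}U(y)$. Your route needs only $U(1)<\infty$ rather than all moments of $\nu(1)$, and is closer to classical renewal convolution bounds; the paper's route is shorter once $\Erw\nu(1)^{l}<\infty$ is in hand and avoids the combinatorial bookkeeping. Two small imprecisions in your write-up, neither fatal: (i) the map $x\mapsto\int_{[0,x]}G(x-y)\,{\rm d}U(y)$ is \emph{not} monotone in $x$ for general locally bounded $G$, so the supremum over $x\le t$ need not be attained at $x=t$; what you actually need (and what is true) is that the majorant $U(1)\sum_{j=0}^{[x]}\sup_{y\in[j,j+1)}G(y)$ is nondecreasing in $x$, so the supremum of the integral is still $O\bigl(\sum_{j=0}^{[t]}\sup_{y\in[j,j+1)}G(y)\bigr)$; (ii) when you split $\int_{[0,t]}G(t-y)\,{\rm d}U(y)$ over $y\in[j,j+1)$, the argument $t-y$ ranges over $(t-j-1,t-j]$, which straddles two integer-grid intervals unless $t\in\N$, so each $\sup_{[m,m+1)}G$ is picked up at most twice — a harmless factor $2$ that should be acknowledged.
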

\begin{proof}
The result is trivial if $G\equiv 0$. Assuming that $G$ is not identically zero, we have
$$ G(t)\ \le\ \sum_{n=0}^{[t]}\bigg(\sup_{y\in[n,\,n+1)}G(y)\bigg)\1_{[n,\,n+1)}(t),\quad t\ge 0. $$
Hence, for $t$ so large that the right-hand side is positive, we infer
\begin{align}
\begin{split}\label{impo1-5}
&\Bigg(\sum_{k\ge 0}(G(t-S_{k}))\1_{\{S_{k}\le t\}}\Bigg)^{l}\ \le\ 
\left(\sum_{n=0}^{[t]}\left(\sup_{y\in[n,n+1)}G(y)\right)(\nu(t-n)-\nu(t-(n+1)))\right)^{l}\\
&\quad\le\ \Bigg(\sum_{k=0}^{[t]}\sup_{y\in[k,\,k+1)}G(y)\Bigg)^{l}\sum_{n=0}^{[t]}{\sup_{y\in[n,\,n+1)}G(y)\over\sum_{k=0}^{[t]}\sup_{y\in[k,\,k+1)}G(y)}\,(\nu(t-n)-\nu(t-(n+1)))^{l},
\end{split}
\end{align}
having utilized $\sum_{j\ge 0}\1_{\{a<S_{j}\le b\}}=\nu(b)-\nu(a)$ a.s.\ for $a<b$ and the convexity of $x\mapsto x^{l}$. It is clear that
$$\nu (t-[t])-\nu(t-[t]-1)\ =\ \nu(t-[t])\ \le\ \nu(1)\quad\text{a.s.} $$ 
Moreover, $\Prob\{\nu(t-n)-\nu(t-(n+1))>x\}\le \Prob\{\nu(1)>x\}$ for $n=0,\ldots, [t]-1$ and $x\ge 0$ by the distributional subadditivity of $\nu(t)$ mentioned earlier. Consequently,
\begin{equation}\label{impo1-4}
\Erw (\nu(t-n)-\nu(t-(n+1)))^{l}\ \le\ \Erw (\nu(1))^{l}\ <\ \infty\quad\text{for}\quad n=0,\ldots, [t],
\end{equation}
where the finiteness follows from \eqref{7}.

\vspace{.1cm}
Passing to expectations in \eqref{impo1-5} and using \eqref{impo1-4}, we see that
\begin{align*}
\Erw\Bigg(\sum_{j\ge 0}G(t-S_{j})\1_{\{S_{j}\le t\}}\Bigg)^{l}\ \le\ 
\Bigg(\sum_{n=0}^{[t]}\sup_{y\in[n,\,n+1)}G(y)\Bigg)^{l}\,\Erw\nu(1)^{l}
\end{align*}
which proves \eqref{impo1-gen}. If $G$ is nonincreasing, then
$$ \sum_{n=0}^{[t]}\sup_{y\in[n,\,n+1)}G(y)\ =\ \sum_{n=0}^{[t]}G(n)\ =\ \int_{0}^{t}G(y){\rm
d}y\ +\ O(1),\quad t\to\infty, $$ 
and \eqref{impo1-mon} follows.\qed
\end{proof}

\vspace{1cm}
\footnotesize
\noindent   {\bf Acknowledgements.}
The research of Gerold Alsmeyer was supported by the Deutsche Forschungsgemeinschaft (SFB 878), the research of Alexander Marynych by the Alexander von Humboldt Foundation. This work was finished while Alexander Iksanov was visiting the Institute of Mathematical Statistics at M\"unster. He gratefully acknowledges financial support and hospitality.

\def\cprime{$'$}


\end{document}